\newtheorem{theorem}{Theorem}[section]
\newtheorem{lemma}[theorem]{Lemma}
\newtheorem{proposition}[theorem]{Proposition}
\newtheorem{definition}[theorem]{Definition}
\theoremstyle{remark}
\numberwithin{equation}{section}
\keywords{Elliptic equations, Beltrami operators, quasiconformal mappings.}
\subjclass[2010]{30C62, 35J55}
\title
{{globally diffeomorphic $\sigma$--harmonic mappings}}
\author[G. Alessandrini]
{Giovanni Alessandrini}
\address[G. Alessandrini]{Dipartimento di Matematica e Geoscienze, Università  di Trieste, Via Valerio 12/b, 34100 Trieste, Italia} 
\email[G. Alessandrini]{alessang@units.it}
\author[V. Nesi]
{Vincenzo Nesi}
\address[V.  Nesi]{Dipartimento di Matematica ``G. Castelnuovo", Sapienza, Università di Roma,
Piazzale A. Moro 2, 00185 Roma, Italy}
\email[V. Nesi]{nesi@mat.uniroma1.it}
\begin{document}
\begin{abstract} Given a two--dimensional mapping $U$ whose components solve a divergence structure elliptic equation, we give necessary and sufficient conditions on the boundary so that $U$ is a global diffeomorphism.
\end{abstract}
\maketitle

\section{Introduction}
\label{intro} Let $B = \{(x,y)\in \mathbb R^2: x^2+y^2<1\}$ denote
the unit disk. 
We denote by  $\sigma=\sigma(x)$, $x\in B$, a possibly non--symmetric matrix  having measurable entries and satisfying the ellipticity conditions
\begin{equation}\label{ell}
\begin{array}{ccrllll}
\hbox{$\hskip0,4cm \sigma(x) \xi\cdot \xi \geq  K^{-1} |\xi|^2$, for every $\xi \in \mathbb R^2\ , x \in B$\,,}\\

\hbox{$\sigma^{-1}(x) \xi \cdot \xi \geq  K^{-1} |\xi|^2$, for every $\xi \in \mathbb R^2 \ , x \in B$\,,}

\end{array}
\end{equation}
for a given constant $K\ge 1$.

Given a diffeomorphism $\Phi=(\varphi^1,\varphi^2)$ from the unit circle
$\partial B$ onto a simple closed curve $\gamma\subseteq \mathbb
R^2$, we denote by $D$  the bounded domain such that $\partial D = \gamma$. With no loss of generality, we may assume that $\Phi$ is orientation preserving.

Let us consider the  mapping $U=(u^1,u^2)\in W^{1,2} (B;\mathbb R^2)\cap
C(\overline{B};\mathbb R^2)$ whose components are the solutions to the following Dirichlet problems
\begin{equation}\label{main}
\left\{
\begin{array}{lll}
{\rm div} (\sigma \nabla u^i)=0,&\hbox{in}&B,\\
u^i=\varphi^i,&\hbox{on}&\partial B \ , i=1,2 \ .
\end{array}
\right.
\end{equation}
Loosely speaking, the question that we intend to address here is:

\emph{Under which conditions can we assure that $U$ is an invertible mapping between   $B$ and $D$ (or $\overline{B}$ and  $\overline{D}$)?}

The classical starting point for this issue is the celebrated Radò--Kneser--Choquet Theorem \cite{rado, k, Choquet, duren} which asserts that assuming $\sigma= I$, the identity matrix, (that is: $u^1, u^2$ are harmonic) if $D$ is convex  then $U$ is a homeomorphism. Generalizations to equations with variable coefficients have been obtained in \cite{bmn, ANARMA}  and to certain nonlinear systems in \cite{BP, AS, IKO}. Counterxamples \cite{Choquet, ANPISA} show that if $D$ is not convex then the invertibility of $U$ may fail.

In \cite{ANPISA} the present authors investigated, in the case of harmonic mappings, which additional conditions are needed for invertibility in the case of a possibly non--convex target $D$. In particular, in \cite[Theorem 1.3]{ANPISA} it is proven that, assuming $\sigma= I$, $U$ is a diffeomorphism if and only if $ \det DU >0$ everywhere on $\partial B$. An improvement to this result, still in the harmonic case, is due to Kalaj \cite{kalaj}.

Here we intend to treat the case of equations with variable coefficients. The main result in this note is the following:

\begin{theorem}\label{main.th}
Assume that the entries of $\sigma$ satisfy $\sigma_{ij}\in C^{\alpha}(\overline{B})$ for some $\alpha \in (0,1)$ and for every $i,j=1,2$.
Assume, in addition, that
 $U\in C^1(\overline{B};\mathbb R^2)$.

The mapping $U$ is a diffeomorphism of $\overline{B}$ onto
$\overline{D}$ \emph{if and only} if
 \begin{equation}\label{d>0}
 \begin{array}{lll}
 \det DU >0 &\hbox{everywhere on}&\partial B.
 \end{array}
 \end{equation}
\end{theorem}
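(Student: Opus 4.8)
The ``only if'' implication is elementary and uses none of the equation: if $U$ is a diffeomorphism of $\overline{B}$ onto $\overline{D}$, then $\det DU$ never vanishes and has constant sign on $\overline{B}$; since $U|_{\partial B}=\Phi$ is orientation preserving and $U$ carries $B$ onto $D$, the map $U$ preserves orientation, hence $\det DU>0$ on $\overline{B}$, in particular \eqref{d>0} holds.

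For the ``if'' implication, assume \eqref{d>0}. The argument splits into two parts: (a) proving $\det DU>0$ on all of $\overline{B}$, and (b) deducing global injectivity from this by a degree computation. Part (b) is routine: once $\det DU>0$ on $\overline{B}$, the map $U\in C^{1}(\overline B)$ is an orientation preserving local diffeomorphism of the compact set $\overline{B}$, open on $B$, with $U(\partial B)=\gamma$ and $U|_{\partial B}=\Phi$ a homeomorphism; hence for $p\notin\gamma$ one has $\deg(U,B,p)=\deg(\Phi,\partial B,p)$, which equals $1$ for $p\in D$ and $0$ for $p$ in the exterior of $\gamma$. Since an orientation preserving local homeomorphism has $\deg(U,B,p)=\#U^{-1}(p)$, the map $U$ misses the exterior of $\gamma$, so $U(B)\subseteq D$ (using that $U(B)$ is open and $\operatorname{int}\overline{D}=D$), and it attains each $p\in D$ exactly once. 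Thus $U\colon B\to D$ is a bijective local diffeomorphism, hence a diffeomorphism, and together with the boundary homeomorphism $\Phi$ it yields the required diffeomorphism $\overline{B}\to\overline{D}$.

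It remains to prove part (a). I would reduce it, as in the harmonic case, to a statement about a holomorphic (or pseudo-analytic) ``dilatation''. Writing $w=u^{1}+iu^{2}$, one has $\det DU=|w_{z}|^{2}-|w_{\bar z}|^{2}$, and the complex gradients $g^{j}=\partial_{z}u^{j}$ satisfy Bers--Vekua pseudo-analytic equations $\partial_{\bar z}g^{j}=A\,g^{j}+B\,\overline{g^{j}}$ with $A,B\in L^{\infty}$ depending only on $\sigma$, the regularity $\sigma_{ij}\in C^{\alpha}$ guaranteeing that all relevant objects are Hölder continuous up to $\partial B$; equivalently, after an auxiliary $K$-quasiconformal change of variables $\chi$ of $\overline{B}$ (of class $C^{1,\alpha'}$, attached to the conformal structure of $\sigma$, cf.\ \cite{ANARMA}) one is reduced to a harmonic $H$ with $w=H\circ\chi$, to which \cite[Theorem~1.3]{ANPISA} (or \cite{kalaj}) applies. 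The geometric engine in either formulation is a winding-number identity: parametrising $\gamma$ by $\theta\mapsto\Phi(e^{i\theta})$, its tangent is a positive multiple of $i\bigl(e^{i\theta}w_{z}-e^{-i\theta}w_{\bar z}\bigr)$, which, by $U\in C^{1}(\overline B)$ and \eqref{d>0}, is continuous and nonvanishing; by Hopf's Umlaufsatz its argument increases by $2\pi$ around $\partial B$ (turning number $+1$, as $\Phi$ is orientation preserving), and since $|w_{\bar z}|<|w_{z}|$ on $\partial B$ this increment also equals $2\pi+\Delta_{\partial B}\arg w_{z}$, forcing $\Delta_{\partial B}\arg w_{z}=0$. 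The argument principle for pseudo-analytic functions (zeros of finite positive order, via the similarity principle) then yields $w_{z}\neq 0$ in $B$, and the dilatation-type function $w_{\bar z}/w_{z}$, of modulus $<1$ on $\partial B$, is of modulus $<1$ in $B$ by a maximum principle, whence $\det DU=|w_{z}|^{2}\bigl(1-|w_{\bar z}/w_{z}|^{2}\bigr)>0$ in $B$.

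The main obstacle is exactly this last step in the genuinely variable-coefficient setting. When $\sigma$ is (conformally) constant the reduction to a harmonic $H$ is transparent, but for general, possibly non-symmetric, $\sigma$ one cannot straighten both components by a single quasiconformal map, so the argument-principle and maximum-principle steps must be carried out directly for the associated pseudo-analytic functions. The delicate point is to propagate the open condition $|w_{\bar z}|<|w_{z}|$ from $\partial B$ into $B$: concretely, one must exhibit a quantity with the same sign as $\det DU$ obeying a bona fide maximum principle, or equivalently rule out interior zeros of $\det DU$ and then conclude by connectedness of $B$ together with the strict positivity of $\det DU$ near $\partial B$.
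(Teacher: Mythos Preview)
Your ``only if'' direction and your part (b) are fine; the paper invokes the Monodromy Theorem rather than degree, but the content is the same. The gap you flag in part (a) is real and is not a technicality that can be patched: the function $w_z=\partial_z u^1+i\,\partial_z u^2$ is \emph{not} pseudo-analytic. With $g^j=\partial_z u^j$ one has $\partial_{\bar z}g^j=Ag^j+B\overline{g^j}$ with common $A,B$, whence $\partial_{\bar z}w_z=Aw_z+B(\overline{g^1}+i\overline{g^2})=Aw_z+Bw_{\bar z}$; since $\overline{w_z}=\overline{g^1}-i\overline{g^2}\neq w_{\bar z}$, this is not a Bers--Vekua equation for $w_z$, so neither the argument principle for $w_z$ nor a maximum principle for $w_{\bar z}/w_z$ is available. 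Your alternative reduction $w=H\circ\chi$ with $H$ harmonic also fails for non-symmetric $\sigma$, as you yourself note, so the proposal stops exactly at the point where the theorem becomes nontrivial.

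The paper's route supplies the missing idea. Rather than pairing $u^1$ with $u^2$, it pairs each real solution $u_\alpha=\cos\alpha\,u^1+\sin\alpha\,u^2$ with its \emph{stream function} $v_\alpha$ (defined by $\nabla v_\alpha=J\sigma\nabla u_\alpha$): the map $f_\alpha=u_\alpha+iv_\alpha$ then genuinely satisfies a Beltrami equation $f_{\bar z}=\mu f_z+\nu\overline{f_z}$ and admits a Stoilow factorisation $f_\alpha=F_\alpha\circ\chi_\alpha$, so the zeros of $F_\alpha'$ count the critical points $M_\alpha$ of $u_\alpha$. One shows (i) $M_\alpha$ is independent of $\alpha$, (ii) $\mathrm{WN}(f_\alpha(\partial B))=M_\alpha+1$, and (iii) via the homotopy $U_t=(u^1,(1-t)v^1+tu^2)$, whose boundary Jacobian $(1-t)\,\sigma\nabla u^1\!\cdot\!\nabla u^1+t\det DU$ is positive, that $\mathrm{WN}(f(\partial B))=\mathrm{WN}(\Phi(\partial B))=1$. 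Hence $M_\alpha=0$ for every $\alpha$, i.e.\ $\nabla u_\alpha\neq0$ in $B$ for all $\alpha$, which is precisely $\det DU>0$ in $B$. This is carried out first under extra smoothness ($\sigma$ Lipschitz, $\Phi\in C^{1,\alpha}$); the general $C^\alpha$ case is then obtained by mollifying $\sigma$ and invoking stability of the geometric index. The stream-function pairing is the device that restores a bona fide quasiregular structure where your $w=u^1+iu^2$ has none.
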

It is evident that, if $U$ is a diffeomorphism on $\overline{B}$, then  $\det DU \neq 0$ on $\partial B$. Thus, from now on, we shall focus on the reverse implication only.

In Section \ref{S2} we begin by proving Theorem \ref{smooth.th}, that is, a version of Theorem \ref{main.th} which requires stronger regularity on $\sigma$ and on $\Phi$.

 Section \ref{S3}  contains the completion of the proof of Theorem \ref{main.th}, let us mention that, as an intermediate step, we also prove Theorem \ref{hom}, which treats the case when the Dirichlet data $\Phi$ is merely a homeomorphism, extending to the case of variable coefficients the result proved in \cite[Theorem 1.7]{ANPISA} for the case of $\sigma= I$.
 
 In the final Section \ref{S4} we sketch the arguments for an improvement, Theorem \ref{hopf}, to Theorem \ref{main.th}, in analogy with \cite[Theorem 5.2]{ANPISA}.

\section{A smoother case}\label{S2}
\begin{theorem}\label{smooth.th}
In addition to the hypotheses of Theorem \ref{main.th}, let us assume that the entries of $\sigma$ satisfy $\sigma_{ij}\in C^{0,1}(\overline{B})$and that $\Phi=(\varphi^1,\varphi^2)\in C^{1,\alpha}(\partial B, \mathbb R^2)$ for some $\alpha \in (0,1)$.
 If
\begin{equation}\label{d>0.2}
 \begin{array}{lll}
 \det DU >0 &\hbox{everywhere on}&\partial B \ ,
 \end{array}
 \end{equation}
then the mapping $U$ is a diffeomorphism of $\overline{B}$ onto
$\overline{D}$.
\end{theorem}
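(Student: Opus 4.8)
The plan is to combine a degree-theoretic argument with the structural features of planar $\sigma$-harmonic mappings, following the philosophy of \cite{ANPISA} but replacing the harmonic conjugate machinery by the tools available for divergence-form equations. First I would exploit the classical fact that in two dimensions a scalar solution $u^i$ of ${\rm div}(\sigma\nabla u^i)=0$ admits a \emph{stream function} $\tilde u^i$, so that the pair $(u^i,\tilde u^i)$ is a $\mu$-quasiregular mapping for a Beltrami coefficient $\mu$ built from $\sigma$; in particular $u^i$ cannot have interior critical points unless it is constant, and more importantly the Schauder theory under the hypotheses $\sigma_{ij}\in C^{0,1}(\overline B)$ and $\Phi\in C^{1,\alpha}$ gives $U\in C^{1,\alpha}(\overline B;\mathbb R^2)$ up to the boundary, so $\det DU$ is continuous on $\overline B$ and \eqref{d>0.2} holds on a neighborhood of $\partial B$ in $\overline B$.

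Next I would establish that $\det DU$ cannot vanish anywhere in $B$. The key structural input is the theorem of Alessandrini--Nesi (and Bauman--Marini--Nesi) that for $\sigma$-harmonic mappings with homeomorphic, respectively suitably regular, boundary data, the Jacobian $\det DU$ does not change sign; equivalently, one invokes that the critical set of such a $U$ is discrete and that $\det DU$ has constant sign wherever it is nonzero. Combined with the sign condition \eqref{d>0.2} on the boundary, this forces $\det DU>0$ throughout $B$, hence $U$ is locally a diffeomorphism everywhere on $\overline B$. The cleanest route is probably: show $U$ is a light, open map (from quasiregularity of each component and the argument principle for the complex combination $u^1+iu^2$ composed with an appropriate quasiconformal change of variables flattening $\sigma$), deduce that the local degree is everywhere $+1$, and then use the boundary behavior.

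Then comes the global step. Since $\Phi$ is an orientation-preserving diffeomorphism of $\partial B$ onto $\partial D$, the topological degree $\deg(U,B,p)$ equals $1$ for every $p\in D$ and $0$ for $p\notin\overline D$; because $\det DU>0$ on $\overline B$, at each preimage the local index is $+1$, so the number of preimages of each regular value $p\in D$ is exactly the degree, namely $1$. Upgrading "almost every $p$ has exactly one preimage" to "every $p$ has exactly one preimage" uses that $U$ is open and discrete together with $\det DU>0$ everywhere (no critical values in $D$, and the boundary contributes none since $U(\partial B)=\partial D$ is disjoint from $D$). Hence $U\colon B\to D$ is a proper local homeomorphism of degree $1$, therefore a homeomorphism, and being $C^1$ with nonvanishing Jacobian it is a diffeomorphism; continuity up to $\overline B$ with the boundary diffeomorphism $\Phi$ then gives the diffeomorphism of $\overline B$ onto $\overline D$.

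The main obstacle I anticipate is the interior non-vanishing of the Jacobian, i.e. promoting the boundary hypothesis \eqref{d>0.2} to all of $B$. Unlike the harmonic case, one cannot write $\det DU$ as the Jacobian of a holomorphic-type object directly; one must instead set up the right quasiconformal substitution so that $U$ becomes a $K'$-quasiregular mapping (with $K'$ depending on $K$), invoke Stoilow factorization $U = F\circ\chi$ with $F$ holomorphic and $\chi$ quasiconformal, and then argue that $\det DU$ vanishes exactly at the (isolated) zeros of $F'$ and that such zeros are incompatible with the degree-one boundary data—this is exactly the point where the results quoted in the introduction (\cite{bmn, ANARMA, ANPISA}) on sign-preservation of the Jacobian for $\sigma$-harmonic mappings must be brought in carefully, tracking that the regularity hypotheses of the present theorem suffice to apply them.
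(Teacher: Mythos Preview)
Your proposal has a genuine gap at precisely the point you flag as the main obstacle, the interior non-vanishing of $\det DU$, and neither of your two proposed tools closes it.

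First, the sign-preservation results of \cite{bmn, ANARMA} that you invoke are Rad\'o--Kneser--Choquet type theorems requiring the target $D$ to be \emph{convex}; the counterexamples recalled in the introduction show that for non-convex $D$ the Jacobian of a $\sigma$-harmonic mapping with diffeomorphic boundary data may well vanish in the interior. So those results are not available here, and your degree argument by itself cannot exclude interior Jacobian zeros: a smooth degree-one map can fold.

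Second, and more seriously, the mapping $U=(u^1,u^2)$ is \emph{not} quasiregular in general, so there is no Stoilow factorization $U=F\circ\chi$. The object that solves a Beltrami equation is $f=u+iv$ where $u$ is a \emph{single} scalar solution and $v$ its stream function; $u^2$ is not the stream function of $u^1$. Likewise, a quasiconformal change of variables ``flattening $\sigma$'' does not turn $u^1+iu^2$ into a holomorphic function.

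The paper supplies the missing mechanism. One works with the one-parameter family $u_{\alpha}=\cos\alpha\,u^1+\sin\alpha\,u^2$ and notes that $\det DU(x)=0$ iff some $u_{\alpha}$ has a critical point at $x$. Using the index theory of \cite{AMPisa} together with the boundary hypothesis \eqref{d>0.2}, the number $M_{\alpha}$ of critical points of $u_{\alpha}$ (with multiplicity) is shown to be \emph{independent of $\alpha$}, say $M$. Then, fixing one $\alpha$, one forms $f=u_{\alpha}+iv_{\alpha}$ with $v_{\alpha}$ the stream function, applies Stoilow to \emph{this} $f$, proves regularity of $\chi$ up to $\partial B$, and computes ${\rm WN}(f(\partial B))=M+1$. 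A homotopy $U_t=(u^1,(1-t)v^1+t\,u^2)$ from $f$ to $U$, for which $\det DU_t>0$ on $\partial B$ throughout, gives ${\rm WN}(f(\partial B))={\rm WN}(U(\partial B))=1$. Hence $M=0$, so $\det DU>0$ on all of $\overline{B}$, and the Monodromy Theorem concludes. The $u_{\alpha}$ rotation trick and the homotopy linking $f$ to $U$ are the ideas your outline is missing.
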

We observe that,  assuming  that  $\sigma_{ij}$ are Lipschitz continuous in $\overline{B}$, it is a straightforward matter to rewrite the equation
\[
{\rm div} (\sigma \nabla u)=0 \]
in the form
\begin{equation}\label{Ab}
{\rm div} (A \nabla u) + b\cdot \nabla u=0 \ ,
\end{equation}
where $b=(b^1, b^2)$ is in $L^{\infty}$ and $A$ is a uniformly elliptic symmetric matrix in the sense of \eqref{ell}, with Lipschitz entries, and it satisfies ${\rm det} A =1$ everywhere.

The calculation is as follows. Denote
\[\widehat{\sigma}= \frac{1}{2}(\sigma+\sigma^T) \ , \widecheck{\sigma}= \frac{1}{2}(\sigma-\sigma^T) \ , \] 
where $(\cdot)^T$ denotes the transposition.
Writing the equation in weak form and using smooth test functions, we obtain
\[0=
{\rm div} (\sigma \nabla u)={\rm div} (\widehat{\sigma} \nabla u)+ \partial_{x_i}\widecheck{\sigma}_{ij}\partial_{x_j}u \ ,\]
next we pose $\gamma = \sqrt{{\rm det} \widehat{\sigma}}$ and $A= \frac{1}{\gamma} \widehat{\sigma}$ and we compute
\[0
=\gamma {\rm div} (A \nabla u)+ \partial_{x_i}(\gamma \delta_{ij}+\widecheck{\sigma}_{ij})\partial_{x_j}u \ ,\]
hence $b^j= \frac{1}{\gamma}\partial_{x_i}(\gamma \delta_{ij}+\widecheck{\sigma}_{ij})$.

We recall that local weak solutions $u$ to \eqref{Ab} are indeed $C^{1,\alpha}$, their critical points are isolated and have finite integral multiplicity. This theory has been developed by R. Magnanini and the first named author \cite{AMPisa}. As a consequence of such a theory, we can state the following auxiliary result. Let us start with some notation.

We denote
\begin{equation}\label{ualpha}
u_{\alpha} = \cos \alpha \, u^1 + \sin \alpha \, u^2 \ , \alpha \in \mathbb R\ ,
\end{equation}
where $u^1, u^2$ are the components of the mapping $U$ appearing in Theorem \ref{main.th}. Next we define
\begin{equation}\label{Malpha}
M_{\alpha} = \text{number of critical points of $u_{\alpha}$ in $B$, counted with their multiplicities}\ .
\end{equation}
Note that, in view of \eqref{d>0}, $M_{\alpha}$ is finite for all $\alpha$.

\begin{proposition}\label{Mconst}
Under the assumptions of Theorem \ref{smooth.th}, we have
\begin{equation}\label{MalphaInt}
M_{\alpha} = \frac{1}{2\pi}\int_{\partial B} {\rm d\,arg}(\partial_z u_{\alpha}) \ ,
\end{equation}
moreover $M_{\alpha}=M$ is constant with respect to $\alpha$.
\end{proposition}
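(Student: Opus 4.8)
The plan is to read off both assertions from the structure theory for solutions of \eqref{Ab} established in \cite{AMPisa}, combined with an elementary degree (homotopy) argument in the parameter $\alpha$. First note that, the equation in \eqref{main} being linear, for every $\alpha\in\mathbb R$ the function $u_\alpha$ of \eqref{ualpha} is again a weak solution, and after the reduction of ${\rm div}(\sigma\nabla u_\alpha)=0$ to the form \eqref{Ab} it falls within the scope of \cite{AMPisa}: $u_\alpha$ is $C^{1}$ up to $\partial B$, its critical points in $B$ are isolated and each carries a well-defined positive integer multiplicity, and --- this is the key analytic input --- the complex gradient $\partial_z u_\alpha$ behaves, as far as its zero set is concerned, like a holomorphic function, so that for any Jordan curve $\Gamma$ contained in $\overline B$ on which $\nabla u_\alpha$ does not vanish, the winding number $\frac{1}{2\pi}\int_{\Gamma}{\rm d\,arg}(\partial_z u_\alpha)$ equals the number of critical points of $u_\alpha$ enclosed by $\Gamma$, counted with multiplicity.

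Next I invoke \eqref{d>0.2}. If $\nabla u_\alpha(p)=0$ at some $p\in\partial B$ with $(\cos\alpha,\sin\alpha)\neq(0,0)$, then the rows $\nabla u^1(p),\nabla u^2(p)$ of $DU(p)$ are linearly dependent, that is $\det DU(p)=0$, contradicting \eqref{d>0.2}; hence $\partial_z u_\alpha\neq 0$ everywhere on $\partial B$, for every $\alpha$. Applying the above with $\Gamma=\partial B$ then yields \eqref{MalphaInt} (and re-proves that $M_\alpha$, defined in \eqref{Malpha}, is finite, since it is a finite sum of positive integers).

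It remains to show that $M_\alpha$ does not depend on $\alpha$. Since $U\in C^1(\overline B)$, the map $(\alpha,\zeta)\mapsto \partial_z u_\alpha(\zeta)=\cos\alpha\,\partial_z u^1(\zeta)+\sin\alpha\,\partial_z u^2(\zeta)$ is continuous on $\mathbb R\times\partial B$ and, by the previous paragraph, nowhere zero; by compactness of $\partial B\times[0,2\pi]$ it is bounded away from $0$. Hence $\alpha\mapsto\frac{1}{2\pi}\int_{\partial B}{\rm d\,arg}(\partial_z u_\alpha)$ is continuous; being integer-valued by \eqref{MalphaInt}, it is locally constant, hence constant, and we call its value $M$. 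The only non-routine ingredient here is the argument principle used in the first paragraph, i.e. the identification of the winding number of $\partial_z u_\alpha$ with the weighted count of critical points; what makes it work --- and what \cite{AMPisa} supplies --- is the quasiregular-type behaviour of the complex gradient of a solution of \eqref{Ab}, which in particular forces every isolated critical point to have \emph{positive} index, so that the count is a genuine winding number rather than a signed one. Everything else is the standard ``continuous $+$ integer-valued $\Rightarrow$ constant'' reasoning.
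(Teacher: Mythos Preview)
Your argument is correct. The derivation of \eqref{MalphaInt} matches the paper's: both invoke the argument principle for solutions of \eqref{Ab} developed in \cite{AMPisa}, after observing (as you do explicitly) that $\nabla u_\alpha\neq 0$ on $\partial B$ because a vanishing combination would force $\det DU=0$ there.

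For the constancy of $M_\alpha$ the two proofs diverge. You run a direct homotopy in $\alpha$: the map $(\alpha,\zeta)\mapsto\partial_z u_\alpha(\zeta)$ is continuous and nowhere zero on $[0,2\pi]\times\partial B$, so the winding number is a continuous integer-valued function of $\alpha$, hence constant. The paper instead appeals to a second formula from \cite{AMPisa}: whenever a $C^1$ unit field $\xi$ on $\partial B$ satisfies $\nabla u_\alpha\cdot\xi>0$ there, one has $M_\alpha=\frac{1}{2\pi}\int_{\partial B}{\rm d\,arg}(\xi)$. Choosing $\xi=|\nabla u^1|^{-1}J\nabla u^1$ one computes $\nabla u_\alpha\cdot\xi=\sin\alpha\,|\nabla u^1|^{-1}\det DU>0$ for $\alpha\in(0,\pi)$, so $M_\alpha$ equals a single $\alpha$-independent integral on that range; continuity at the endpoints and the symmetry $u_{\alpha+\pi}=-u_\alpha$ finish the job. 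Your route is lighter --- it needs nothing beyond the argument principle already used for \eqref{MalphaInt} and the standard ``continuous integer $\Rightarrow$ constant'' step --- whereas the paper's route imports an extra identity but in exchange pins down $M$ as an explicit winding number of a fixed field.
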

Here $\partial_z$ denotes the usual complex derivative, where it is understood $z=x_1+ix_2$.
\begin{proof}
Formula \eqref{MalphaInt} is a manifestation of the argument principle. A proof, with some changes in notation, can be found in \cite[Proof of Theorem 2.1]{AMPisa}. Also, a special case of Theorem 2.1 in \cite{AMPisa}, tell us that if $\xi$ is a $C^1$ unitary  vector field on $\partial B$ such that $\nabla u_{\alpha} \cdot \xi > 0 $ everywhere on $\partial B$ then we have
\begin{equation}\label{MalphaIntxi}
M_{\alpha} = \frac{1}{2\pi}\int_{\partial B} {\rm d\,arg}(\xi) \ .
\end{equation}
Let us denote
\begin{equation}\label{xi}
\xi =\frac{1}{|\nabla u_1|}J \nabla u_1 \ .
\end{equation}
where the matrix $J$ represents the counterclockwise $90^{\circ}$ rotation
\begin{equation}\label{J}
 J=\left(
\begin{array}{ccc}
0&-1\\
1&0
\end{array}
\right)\,,
\end{equation}
and we compute
\[\nabla u_{\alpha} \cdot \xi =  \frac{\sin \alpha}{|\nabla u_1|}\nabla u_2 \cdot J \nabla u_1 = \frac{\sin \alpha}{|\nabla u_1|} \det DU
\]
which is positive for all $\alpha \in (0, \pi)$. Hence $M_{\alpha}$ is constant for all $\alpha \in (0, \pi)$, by continuity the same is true 
for all $\alpha \in [0, \pi]$. The proof is complete, by noticing that $u_{\alpha+\pi}= - u_{\alpha}$. 
\end{proof}
Our next goal being to prove that $M=M_{\alpha}=0$, we return to the equation in pure divergence form. Denoting $u=u_{\alpha}$ for any fixed $\alpha$, we have that the equation
\[
{\rm div} (\sigma \nabla u)=0 \]
holds in $B$. It is well--known that
there exists $v \in W^{1,2}(B)$, called the \emph{stream function} of $u$  such that
\begin{equation}\label{CR}
\nabla v =  J \sigma \nabla u \ ,
\end{equation}
where, again, the matrix $J$ denotes the counterclockwise $90^{\circ}$ rotation \eqref{J}, see, for instance, \cite{AMsiam}. Denoting
\begin{equation}\label{QR}
f = u + i v \ ,
\end{equation}
it is well--known that $f$ solves the Beltrami type equation
\begin{equation}\label{B}
\begin{array}{ll}
f_{\bar{z}}=\mu f_z +\nu \overline{f_z}\ & \hbox{in $B$}\ ,
\end{array}
\end{equation}
where, the so called complex dilatations $\mu , \nu$ are given by
\begin{equation}\label{SNU}
\begin{array}{llll}
\mu=\frac{\sigma_{22}-\sigma_{11}-i(\sigma_{12}+\sigma_{21})}{1+{\rm
Tr\,}\sigma +\det \sigma}& \ ,&\nu =\frac{1-\det \sigma
+i(\sigma_{12}-\sigma_{21})}{1+{\rm Tr\,}\sigma +\det \sigma}\ ,
\end{array}
\end{equation}
and satisfy the following
ellipticity condition
\begin{equation}\label{ellQC}
|\mu|+|\nu|\leq k< 1 \,,
\end{equation}
where the constant $k$ only depends on $K$, see \cite[Proposition 1.8]{ANFINNICO} and the notation  ${\rm Tr\,} A$ is used for the trace of a square matrix $A$.

Furthermore, it is also well--known, Bers and Nirenberg \cite{bersni}, Bojarski \cite{BO}, that a $W^{1,2}$ solution to \eqref{B} fulfills the so--called Stoilow representation
\begin{equation}\label{St}
f= F\circ \chi \ ,
\end{equation}
where $F$ is holomorphic and $\chi$ is a quasiconformal homeomorphism, which can be chosen to map $B$ into itself. Moreover, $\chi$ solves the Beltrami equation
\begin{equation}\label{Bchi}
\begin{array}{ll}
\chi_{\bar{z}}=\widetilde{\mu} \chi_z \ & \hbox{in $B$}\ ,
\end{array}
\end{equation}
where $\widetilde{\mu}$ is defined almost everywhere by
\[\widetilde{\mu} = \mu + \frac{\overline{f_z}}{f_z}\nu \  ,  \]
Note that, under the present assumptions,  $\mu , \nu$ are Lipschitz continuous in $\overline B$ and $f$ is in $C^{1,\alpha}(\overline B, \mathbb C)$. 

From now on, for simplicity, we denote by $B_{\rho}$ be the disk of radius $\rho>0$ concentric to $B$.

In  view of   \eqref{d>0}, there exists $0<\rho<1$ such that $\partial_z f \neq 0$ on $\overline B \setminus B_{\rho} $. As a consequence, $\widetilde{\mu}$ is $C^{\alpha}$ on $\overline B \setminus B_{\rho} $, 
and the following Lemma holds.
\begin{lemma}
Under the assumptions of Theorem \ref{smooth.th},  there exists $0<\rho<1$ such that the mapping $\chi$, appearing in \eqref{St}, belongs to $C^{1,\alpha}$, for some $0<\alpha<1$, when restricted to $\overline B \setminus B_{\rho}$.
\end{lemma}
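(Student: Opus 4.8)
The plan is to reduce to standard elliptic/quasiconformal regularity for the Beltrami equation satisfied by $\chi$, localized to the annular region $\overline B \setminus B_\rho$. First I would recall that by the paragraph preceding the Lemma, $\rho$ has been chosen so that $\partial_z f \neq 0$ on $\overline B \setminus B_\rho$; since $f\in C^{1,\alpha}(\overline B,\mathbb C)$ and the coefficients $\mu,\nu$ are Lipschitz in $\overline B$, the formula $\widetilde\mu = \mu + \tfrac{\overline{f_z}}{f_z}\nu$ exhibits $\widetilde\mu$ as a quotient of $C^\alpha$ functions whose denominator is bounded away from $0$ on $\overline B\setminus B_\rho$; hence $\widetilde\mu \in C^\alpha(\overline B\setminus B_\rho)$, and of course $\|\widetilde\mu\|_\infty \le k <1$ there by \eqref{ellQC} together with $|f_z|=|f_{\bar z}|$-type bounds, so the equation \eqref{Bchi} is uniformly elliptic.

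Next I would invoke interior Schauder-type regularity for the Beltrami equation with $C^\alpha$ dilatation: any $W^{1,2}$ solution $\chi$ of $\chi_{\bar z} = \widetilde\mu \,\chi_z$ with $\widetilde\mu\in C^\alpha$ on an open set $\Omega'$ belongs to $C^{1,\alpha}_{\mathrm{loc}}(\Omega')$. Concretely, one can apply the representation $\chi = G \circ \eta$ or, more directly, differentiate through the Stoilow factorization, or simply cite the regularity theory for the generalized Beltrami operator (e.g. via the Cauchy transform: $\chi_z = h + \mathcal T(\widetilde\mu \chi_z)$ where $\mathcal T$ maps $C^\alpha$ to $C^{1,\alpha}$ and $h$ is holomorphic, so a bootstrap on the open annulus gives $\chi_z\in C^\alpha$, hence $\chi\in C^{1,\alpha}$). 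Applying this with $\Omega'$ a slightly larger annulus $B\setminus \overline{B_{\rho'}}$, $\rho'<\rho$, yields $\chi\in C^{1,\alpha}(\overline B\setminus B_\rho)$ after noting that the boundary $\partial B$ poses no obstruction because $\chi$ maps $B$ onto $B$ and the relevant boundary regularity for quasiconformal maps with Hölder dilatation up to a smooth boundary arc is classical (Kellogg-type / reflection arguments for the Beltrami equation).

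The main obstacle is the behavior up to $\partial B$: interior regularity alone gives $C^{1,\alpha}$ only on compact subsets of the open annulus, so I must argue regularity up to the outer boundary circle. I would handle this either by a reflection trick — extending $\widetilde\mu$ across $\partial B$ (using that $\chi(B)=B$ and the smoothness of $\partial B$, one reflects $\chi$ via $z\mapsto 1/\bar z$ to obtain a solution of a Beltrami equation with $C^\alpha$ dilatation in a full neighbourhood of $\partial B$) — or by directly citing boundary Schauder estimates for quasiconformal mappings. Either way, combined with the interior estimate on the inner part of the annulus, this gives $\chi\in C^{1,\alpha}(\overline B\setminus B_\rho)$. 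I should also remark that the Jacobian of $\chi$ is nonvanishing there, so that $\chi$ restricted to $\overline B\setminus B_\rho$ is in fact a $C^{1,\alpha}$ diffeomorphism onto its image, which is what will be needed in the sequel; this follows from the nonvanishing of $\partial_z\chi$ for quasiconformal homeomorphisms together with the $C^{1,\alpha}$ regularity just established.
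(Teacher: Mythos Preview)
Your interior step is fine and essentially the same as what the paper takes for granted: once $\widetilde\mu\in C^{\alpha}$ on the open annulus, local $C^{1,\alpha}$ regularity of $\chi$ follows from standard Beltrami/Schauder theory. The difference between your argument and the paper's lies entirely in how the boundary circle $\partial B$ is handled, and here your proposal is sketchier than it looks.

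The reflection you describe does not directly give a Beltrami equation with $C^{\alpha}$ dilatation in a full two-sided neighborhood of $\partial B$: if one extends $\chi$ by $z\mapsto 1/\overline{\chi(1/\bar z)}$, the resulting Beltrami coefficient on the exterior is (up to a unimodular factor) $\overline{\widetilde\mu(1/\bar z)}$, which in general does \emph{not} match $\widetilde\mu$ on $\partial B$, so the extended coefficient has a jump along $\partial B$ and interior estimates no longer yield $C^{1,\alpha}$ across it. Your alternative of ``citing boundary Schauder estimates for quasiconformal mappings'' is also not immediate, because the boundary condition you have is $|\chi|=1$, not a Dirichlet datum for $\chi$. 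What \emph{does} work along the lines you suggest is: extend $\widetilde\mu$ to a compactly supported $C^{\alpha}$ function on $\mathbb C$ with $|\widetilde\mu|\le k$, take the global principal solution $\zeta$ (which is then $C^{1,\alpha}$), observe that $\chi\circ\zeta^{-1}$ is conformal from the $C^{1,\alpha}$ domain $\zeta(B)$ onto $B$, and invoke Kellogg--Warschawski to conclude. That is presumably what you mean by ``Kellogg-type'', but it should be said explicitly, since the reflection route as written fails.

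The paper takes a cleaner and more self-contained path that avoids Kellogg's theorem altogether. It writes $\chi=\exp(\omega)$ on the annulus, so that $\omega$ solves the same Beltrami equation $\omega_{\bar z}=\widetilde\mu\,\omega_z$, and then observes that $w=\Re\omega=\log|\chi|$ satisfies a divergence-form elliptic equation ${\rm div}(\widetilde\sigma\nabla w)=0$ with $C^{\alpha}$ coefficients (the symmetric $\widetilde\sigma$ being the usual matrix built from $\widetilde\mu$). The key point is that the nonlinear constraint $|\chi|=1$ on $\partial B$ becomes the \emph{homogeneous Dirichlet condition} $w=0$. Standard Schauder boundary regularity then gives $w\in C^{1,\alpha}$ up to $\partial B$, and the stream-function relation $\nabla(\Im\omega)=J\widetilde\sigma\nabla w$ upgrades this to $\omega$, hence to $\chi$. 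This trick of passing to $\log|\chi|$ is the main idea you are missing; it turns the awkward ``maps circle to circle'' condition into a textbook Dirichlet problem.
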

\begin{proof}
For $\rho$ sufficiently close to $1$ we may represent $\chi = \exp \left(\omega\right)$ in the annulus $\overline B \setminus B_{\rho}$. Also, for every determination of $\omega$, we have
\begin{equation}\label{Bomega}
\omega_{\bar{z}}=\widetilde{\mu} \omega_z \ .
\end{equation}
Now, posing $w = \Re e (\omega) = \log |\chi |$, it is well--known that we have 
\[
{\rm div} (\widetilde{\sigma} \nabla w)=0, \text{ in } B \setminus \overline{B_{\rho} } \]
where $\widetilde{\sigma}$ is given by
\begin{equation}\label{sigtill}
 \widetilde{\sigma}=\left(
\begin{array}{ccc}
\displaystyle{\frac{|1-\widetilde{\mu}|^2}{1-|\widetilde{\mu}|^2}}&\displaystyle{-\frac{2\Im m(\widetilde{\mu})}{1-|\widetilde{\mu}|^2}}\\ \\
\displaystyle{-\frac{2\Im m(\widetilde{\mu})}{1-|\widetilde{\mu}|^2}}&\displaystyle{\frac{|1+\widetilde{\mu}|^2}{1-|\widetilde{\mu}|^2}}
\end{array}
\right)\,,
\end{equation}
%
%
%
and satisfies uniform ellipticity conditions of the form \eqref{ell}, see, for instance, \cite{ANFINNICO}. Moreover, $\widetilde{\sigma}$ has H\"older continuous entries in $\overline B \setminus B_{\rho}$. Now, since, trivially, $w=0$ on $\partial B$, then, by standard regularity at the boundary, $w$ is $C^{1,\alpha}$ near $\partial B$. Such a regularity extends to $\omega$, and then to $\chi$, because \eqref{Bomega} can be rewritten as $\nabla \Im m (\omega) = J \widetilde{\sigma} \nabla w$.
\end{proof}
Next we recall the following classical notion, see for instance \cite{wh}.
\begin{definition}\label{wn.def} Given a
closed curve $\gamma$, parametrized by $\Phi \in C^1(\left[0,2\pi\right];\mathbb R^2)$ and such that
\begin{equation*}
\frac{d \Phi}{d \vartheta} \neq 0,\  \textrm{for every} \  \vartheta\in  [0,2 \pi],
\end{equation*}
we define the {\em winding number} of $\gamma$ as the following integer
\begin{equation*}
{\rm WN}(\gamma) =\frac{1}{2\pi} \int_0^{2\pi} {\rm d\,arg} \left(\frac{d \Phi}{d \vartheta}\right).
\end{equation*}
\end{definition}

\begin{proposition}\label{M+1}
Under the previously stated assumptions
\begin{equation*}
{\rm WN}(f(\partial B))= M+1 \ ,
\end{equation*}
with  $M$ as in Proposition \ref{Mconst}.
\end{proposition}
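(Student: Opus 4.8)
The plan is to compute the integer ${\rm WN}(f(\partial B))$ directly from Definition \ref{wn.def}, parametrizing $\partial B$ counterclockwise by $z=e^{i\vartheta}$, $\vartheta\in[0,2\pi]$. First I would record that $f_z=\tfrac12\,\overline{(I+\sigma)\nabla u_\alpha}$ on $\overline B$ — a one-line computation from the stream function relation \eqref{CR}, $\nabla v=J\sigma\nabla u_\alpha$ — so that, by \eqref{d>0} (which forces $\nabla u_\alpha\neq0$ on $\partial B$) and the invertibility of $I+\sigma$, we have $f_z\neq0$ on $\partial B$. Then the chain rule together with the Beltrami equation \eqref{B} gives, on $\partial B$,
\[
\frac{d}{d\vartheta}f(e^{i\vartheta})=iz\,f_z-i\bar z\,f_{\bar z}=i\,z\,f_z\left(1-\bar z^{2}\mu-\bar z^{2}\nu\,\frac{\overline{f_z}}{f_z}\right),
\]
where I used $\bar z/z=\bar z^{2}$ for $|z|=1$; in particular this is nowhere zero, so ${\rm WN}(f(\partial B))$ is well defined. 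Applying ${\rm d\,arg}$ and integrating over $\partial B$: the constant $i$ contributes $0$, the factor $z$ contributes $2\pi$ — this is the ``$+1$'' — and the remaining factor takes values in the closed disk of centre $1$ and radius $|\mu|+|\nu|\le k<1$ by \eqref{ellQC}, hence never encircles the origin and contributes $0$. Thus
\[
2\pi\,{\rm WN}(f(\partial B))=2\pi+\int_{\partial B}{\rm d\,arg}(f_z).
\]

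Next I would identify $\int_{\partial B}{\rm d\,arg}(f_z)$ with $2\pi M$, using Proposition \ref{Mconst} in the form \eqref{MalphaInt}, i.e.\ $\frac1{2\pi}\int_{\partial B}{\rm d\,arg}(\partial_z u_\alpha)=M$. Viewing $\nabla u_\alpha$ as the complex-valued function $(u_\alpha)_x+i(u_\alpha)_y$ on $\partial B$, one has $\partial_z u_\alpha=\tfrac12\,\overline{\nabla u_\alpha}$, while the identity above reads $f_z=\tfrac12\,\overline{(I+\sigma)\nabla u_\alpha}$. Hence, along $\partial B$, ${\rm d\,arg}(f_z)=-{\rm d\,arg}\big((I+\sigma)\nabla u_\alpha\big)$ and ${\rm d\,arg}(\partial_z u_\alpha)=-{\rm d\,arg}(\nabla u_\alpha)$, so it is enough to show that the loops $\vartheta\mapsto(I+\sigma(e^{i\vartheta}))\,\nabla u_\alpha(e^{i\vartheta})$ and $\vartheta\mapsto\nabla u_\alpha(e^{i\vartheta})$ in $\mathbb C\setminus\{0\}$ have the same winding number.

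This last point is the crux. For each $x\in\partial B$ and each $t\in[0,1]$, the matrix $I+t\,\sigma(x)$ has symmetric part $I+t\,\widehat{\sigma}(x)$, which by the ellipticity \eqref{ell} is positive definite; hence $I+t\,\sigma(x)$ is invertible with positive determinant for all $t\in[0,1]$. Therefore $(t,\vartheta)\mapsto(I+t\,\sigma(e^{i\vartheta}))\,\nabla u_\alpha(e^{i\vartheta})$ is a homotopy within $\mathbb C\setminus\{0\}$ from $\nabla u_\alpha$ to $(I+\sigma)\nabla u_\alpha$, and the two loops have equal winding number. Combining,
\[
\int_{\partial B}{\rm d\,arg}(f_z)=-\int_{\partial B}{\rm d\,arg}(\nabla u_\alpha)=\int_{\partial B}{\rm d\,arg}(\partial_z u_\alpha)=2\pi M,
\]
and the Proposition follows.

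The algebra (chain rule and Beltrami equation on $\partial B$, the formula for $f_z$) is routine and legitimate because, under the hypotheses of Theorem \ref{smooth.th}, $f\in C^{1,\alpha}(\overline B)$ and $\mu,\nu$ extend continuously to $\partial B$, so \eqref{B} holds there as well. I expect the genuinely delicate step to be the winding-number invariance: one must make sure the deformation $I+t\,\sigma(x)$ never carries $\nabla u_\alpha$ to $0$, which is precisely what positivity of the symmetric part $I+t\,\widehat{\sigma}$ guarantees; and one must keep careful track of orientations (the circle run counterclockwise, $\Phi$ orientation preserving) so that the ``$+1$'' comes out with the correct sign.
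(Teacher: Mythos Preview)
Your argument is correct and takes a genuinely different route from the paper's. The paper exploits the Stoilow factorisation $f=F\circ\chi$ (together with the preceding Lemma on the $C^{1,\alpha}$ boundary regularity of $\chi$) to write $\frac{d}{d\vartheta}f(e^{i\vartheta})=F'(e^{i\varphi(\vartheta)})\,e^{i\varphi(\vartheta)}\varphi'(\vartheta)$, and then reads off the winding number as the number of zeros of the holomorphic $F'$ (which equals $M$ via the notion of geometric critical points) plus $1$ from the factor $e^{i\varphi}\varphi'$. You bypass Stoilow entirely: from the explicit identity $f_z=\tfrac12\,\overline{(I+\sigma)\nabla u_\alpha}$ and the Beltrami relation you split $\frac{d}{d\vartheta}f$ into $iz$, $f_z$, and a factor trapped in the disk $|w-1|\le k<1$; the homotopy $t\mapsto I+t\sigma$ (invertible because its symmetric part $I+t\widehat{\sigma}$ is positive definite) then identifies $\int_{\partial B}{\rm d\,arg}(f_z)$ with $\int_{\partial B}{\rm d\,arg}(\partial_z u_\alpha)=2\pi M$ from \eqref{MalphaInt}. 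Your approach is more elementary and self-contained --- it does not require the boundary-regularity Lemma for $\chi$ nor the machinery of geometric critical points from \cite{AMsiam} --- while the paper's approach makes the connection with the holomorphic model $F$ more transparent, which may be conceptually appealing when one later wants to track how critical points behave under perturbation.
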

\begin{proof}
With no loss of generality, we may assume $\chi(1)=1$.

We have that, for every $\vartheta \in \mathbb R$,
\[ f(e^{i \vartheta}) = F(e^{i \varphi(\vartheta)}) \]
where
\[e^{i \varphi(\vartheta)} = \chi(e^{i \vartheta}) \]
hence $\varphi$ is a strictly increasing function from $[0,2\pi]$ into itself, with $C^{1,\alpha}$ regularity.
Consequently
\[\frac{1}{2\pi} \int_0^{2\pi} {\rm d\,arg} \left(\frac{d f(e^{i \vartheta})}{d \vartheta}\right) = 
\frac{1}{2\pi} \int_0^{2\pi} {\rm d\,arg} \left(F'(e^{i \varphi(\vartheta)})\right) + \frac{1}{2\pi} \int_0^{2\pi} {\rm d\,arg} \left(e^{i \varphi(\vartheta)}\varphi'(\vartheta)\right) \ .
\]
For the second integral we trivially have
\[ \frac{1}{2\pi} \int_0^{2\pi} {\rm d\,arg} \left(e^{i \varphi(\vartheta)}\varphi'(\vartheta)\right) = 1 \ ,
\]
whereas, by the argument principle, the integral
\[\frac{1}{2\pi} \int_0^{2\pi} {\rm d\,arg} \left(F'(e^{i \varphi(\vartheta)})\right) = \frac{1}{2\pi} \int_{\partial B} {\rm d\,arg} \left(F'(z)\right)\]
equals the number of zeroes of $F'$ when counted with their multiplicities, which coincides with the number of critical points of $u$, again counted with their multiplicities, that is, $M$. This is a consequence of the notions
 of \emph{geometrical critical points} and \emph{geometric index} introduced in \cite[Definition 2.4]{AMsiam}, which in the present circumstances, coincide with the usual concepts of critical points and multiplicity, respectively.\end{proof}
Next we compute:
\begin{proposition}\label{WN=1}
\begin{equation*}
{\rm WN}(f(\partial B))= {\rm WN}(\Phi(\partial B))=1 \ .
\end{equation*}
\end{proposition}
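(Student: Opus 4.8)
The plan is to prove the two equalities separately: the second is standard, while the first carries the real content. For ${\rm WN}(\Phi(\partial B))=1$ I would argue as follows. By hypothesis $\Phi\in C^{1,\alpha}(\partial B,\mathbb R^{2})$ is an orientation preserving diffeomorphism onto $\gamma=\partial D$, so $\gamma$ is a positively oriented, regular, simple closed curve of class $C^{1}$, and, in the sense of Definition \ref{wn.def}, ${\rm WN}(\Phi(\partial B))$ is exactly its rotation index. By the classical theorem on the turning of tangents (Umlaufsatz, see \cite{wh}) this rotation index equals $+1$.

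The heart of the statement is the equality ${\rm WN}(f(\partial B))={\rm WN}(\Phi(\partial B))$, which I would establish by a homotopy of boundary parametrizations joining (a rigid rotation of) $\Phi$ to $f|_{\partial B}$ through curves with nowhere vanishing velocity. Set $u^{\perp}=u_{\alpha+\pi/2}$ and identify $\mathbb R^{2}$ with $\mathbb C$; then on $\partial B$ the couple $(u_\alpha,u^{\perp})$ equals $e^{-i\alpha}\Phi$, a rigid rotation of $\gamma$ and hence with the same turning of the tangent as $\Phi$, whereas $(u_\alpha,v)=f$. I would then consider, for $s\in[0,1]$,
\[
\Psi_s(e^{i\vartheta})=\Bigl(\,u_\alpha(e^{i\vartheta}),\ (1-s)\,u^{\perp}(e^{i\vartheta})+s\,v(e^{i\vartheta})\,\Bigr)\ ,
\]
so that $\Psi_0=e^{-i\alpha}\Phi$ and $\Psi_1=f|_{\partial B}$. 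Under the assumptions of Theorem \ref{smooth.th} we have $u^{1},u^{2},v\in C^{1}(\overline B)$, hence $(s,\vartheta)\mapsto\frac{d}{d\vartheta}\Psi_s(e^{i\vartheta})$ is continuous; once this velocity is shown to be nowhere zero, $s\mapsto\frac{1}{2\pi}\int_0^{2\pi}{\rm d\,arg}\,\frac{d}{d\vartheta}\Psi_s$ is a continuous integer, hence constant, and evaluating at $s=0$ and $s=1$ yields the equality. Incidentally this also gives $\frac{d}{d\vartheta}f(e^{i\vartheta})\neq0$ and $\frac{d\Phi}{d\vartheta}\neq0$, which is what makes the two winding numbers meaningful.

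The main obstacle — and the only step where the hypotheses are used — is the non-vanishing of $\frac{d}{d\vartheta}\Psi_s$. Its first component is the tangential derivative $\partial_\tau u_\alpha$; if this is nonzero at a given point of $\partial B$ there is nothing to check, so suppose $\partial_\tau u_\alpha=0$ at $e^{i\vartheta_0}$. Then $\nabla u_\alpha(e^{i\vartheta_0})=c\,n$ with $n$ the exterior unit normal and $c\neq0$, since \eqref{d>0} makes $\nabla u^{1},\nabla u^{2}$ linearly independent on $\partial B$, whence $\nabla u_\alpha\neq0$ there. Now two sign computations close the argument: because $(\nabla u_\alpha,\nabla u^{\perp})$ is a rotation of $(\nabla u^{1},\nabla u^{2})$ one has $\det[\nabla u_\alpha\,|\,\nabla u^{\perp}]=\det DU$, so $c\,\partial_\tau u^{\perp}(e^{i\vartheta_0})=\det DU(e^{i\vartheta_0})>0$; and, from \eqref{CR} together with $J^{T}\tau=n$, one gets $\partial_\tau v=\sigma\nabla u_\alpha\cdot n$ on $\partial B$, so $c\,\partial_\tau v(e^{i\vartheta_0})=c^{2}\,(\sigma n\cdot n)>0$ by the ellipticity \eqref{ell}. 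Hence $\partial_\tau u^{\perp}(e^{i\vartheta_0})$ and $\partial_\tau v(e^{i\vartheta_0})$ are nonzero with the same sign, so their convex combination is nonzero for every $s\in[0,1]$, and the second component of $\frac{d}{d\vartheta}\Psi_s$ does not vanish at $e^{i\vartheta_0}$. Geometrically, at a boundary point where the level line of $u_\alpha$ is tangent to $\partial B$, both the stream function $v$ and the ``rotated height'' $u^{\perp}$ are strictly monotone along $\partial B$ in the same direction — $v$ because of the definite sign of the conormal derivative (ellipticity), $u^{\perp}$ because of $\det DU>0$. Once this is done the homotopy is admissible, and together with the first step it gives ${\rm WN}(f(\partial B))={\rm WN}(\Phi(\partial B))=1$.
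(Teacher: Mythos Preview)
Your argument is correct and follows essentially the same route as the paper: the same homotopy (the paper takes $\alpha=0$ and writes $U_t=(u^1,(1-t)v^1+tu^2)$, which is your $\Psi_s$ with the parameter reversed) connecting $f|_{\partial B}$ to $\Phi$, together with the Umlaufsatz for the value $1$. The only difference is in checking that the tangential velocity never vanishes: you do a case split on whether $\partial_\tau u_\alpha=0$, whereas the paper observes in one line that the full Jacobian satisfies $\det D\Psi_s=(1-s)\det DU+s\,\sigma\nabla u_\alpha\!\cdot\!\nabla u_\alpha>0$ on $\partial B$, so $D\Psi_s$ is invertible there and carries the nonzero tangent vector to a nonzero vector.
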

\begin{proof}
We may fix $\alpha=0$, that is, $u=u^1$, and let $v^1$ be its stream function. For every $t\in[0,1]$ let us consider $U_t = (u^1, (1-t)v^1+ t u^2)$. Trivially
\[U_0\approx u^1+i v^1 =  f \ , U_1 = U \ . \]
We compute 
\begin{equation*}
\det DU_t=(1-t) \sigma \nabla u\cdot \nabla u + t \det DU > 0 \ , \text{ on } \partial B \ , \text{ for every } t\in [0,1] \ ,
\end{equation*}
consequently
\begin{equation*}
\beta_t(\vartheta)=\frac{d}{d\vartheta}U_t(e^{i\vartheta}) \ , \text{ for every } t\in [0,1]\ , \ \vartheta \in \left[0,2\pi\right]\ .
\end{equation*}
never vanishes. By homotopic invariance of the winding number, \cite[Theorem 1]{wh}, the thesis follows.
\end{proof}
\begin{proof}[Proof of Theorem \ref{smooth.th}]
Combining Propositions \ref{Mconst}, \ref{M+1} and \ref{WN=1} we deduce that, for all $\alpha$, $\nabla u_{\alpha}$ nowhere vanishes. Hence $\det DU >0$ everywhere in $\overline B$. Hence it is a local diffeomorphism which is one--to--one on the boundary, by the Monodromy Theorem, see for instance \cite[p.175]{kerek}, the thesis follows.
\end{proof}

\section{Proof of Theorem \ref{main.th}}\label{S3}
We start by removing the hypothesis of Lipschitz continuity on $\sigma$, and obtain an intermediate weaker result.
\begin{lemma}\label{Phismooth}
In addition to the hypotheses of Theorem \ref{main.th}, let us assume
 $\Phi=(\varphi^1,\varphi^2)\in C^{1,\alpha}(\partial B, \mathbb R^2)$, for some $\alpha \in (0,1)$.
Then $U$ is locally a homeomorphism in ${B}$.
\end{lemma}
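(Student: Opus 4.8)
The plan is to recover the conclusion of Theorem \ref{smooth.th} by an approximation argument in the coefficient matrix $\sigma$, exploiting the fact that local homeomorphy is a robust (open) condition, whereas the hypothesis \eqref{d>0} need only survive in a weakened, still usable form. Concretely, I would first reduce to the canonical form \eqref{Ab}: since $\sigma_{ij}\in C^\alpha(\overline B)$ we cannot literally perform the computation that produced \eqref{Ab}, so instead I would mollify $\sigma$ to a sequence $\sigma^{(n)}$ with $\sigma^{(n)}_{ij}\in C^{0,1}(\overline B)$, uniformly elliptic with the same constant $K$ (this is standard: mollification preserves the quadratic-form bounds), and $\sigma^{(n)}\to\sigma$ in, say, $L^p(\overline B)$ for every $p<\infty$ and uniformly on compact subsets, hence in $C^\beta_{\mathrm{loc}}$ for $\beta<\alpha$ after a diagonal extraction. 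Let $U^{(n)}=(u^{1,(n)},u^{2,(n)})$ solve \eqref{main} with $\sigma$ replaced by $\sigma^{(n)}$ and the same boundary data $\Phi$. By the Schauder-type interior and boundary estimates available for \eqref{Ab} (the theory of \cite{AMPisa}, together with the $C^{1,\alpha}$ boundary datum $\Phi$), the $U^{(n)}$ are bounded in $C^{1,\beta}(\overline B;\mathbb R^2)$, so $U^{(n)}\to U$ in $C^1(\overline B;\mathbb R^2)$ along a subsequence, and in particular $\det DU^{(n)}\to\det DU$ uniformly on $\partial B$.

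Since $\det DU>0$ on the compact set $\partial B$, there is $m_0>0$ with $\det DU\ge 2m_0$ there, hence $\det DU^{(n)}\ge m_0>0$ on $\partial B$ for all large $n$. Thus each $U^{(n)}$ satisfies the hypotheses of Theorem \ref{smooth.th}: the coefficients $\sigma^{(n)}_{ij}$ are Lipschitz, $\Phi\in C^{1,\alpha}$, and \eqref{d>0.2} holds. Therefore $U^{(n)}$ is a diffeomorphism of $\overline B$ onto $\overline D$; in particular $\det DU^{(n)}>0$ everywhere in $\overline B$. The point is now to pass this interior positivity to the limit. A pointwise limit of positive functions need only be nonnegative, so I would instead argue through the multiplicity bookkeeping: fix $\alpha\in\mathbb R$ and set $u_\alpha^{(n)}=\cos\alpha\,u^{1,(n)}+\sin\alpha\,u^{2,(n)}$. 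By Proposition \ref{Mconst} applied to $\sigma^{(n)}$, the critical-point count $M^{(n)}_\alpha$ equals the boundary integral $\frac1{2\pi}\int_{\partial B}\mathrm d\,\mathrm{arg}(\partial_z u_\alpha^{(n)})$, and since $U^{(n)}$ is a global diffeomorphism we have $M^{(n)}_\alpha\equiv 0$ (Propositions \ref{M+1}, \ref{WN=1} force $M^{(n)}=0$). Because $\partial_z u^{(n)}_\alpha\to\partial_z u_\alpha$ uniformly on $\partial B$ and, for $n$ large, $\det DU^{(n)}>0$ on $\partial B$ keeps $\partial_z u_\alpha^{(n)}$ away from $0$ on $\partial B$ whenever $\partial_z u_\alpha$ is, the winding number is stable under the limit, so $\frac1{2\pi}\int_{\partial B}\mathrm d\,\mathrm{arg}(\partial_z u_\alpha)=0$. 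Running this for all $\alpha$ and invoking the structure theory of \cite{AMPisa} (isolated critical points of finite positive multiplicity) exactly as in the proof of Theorem \ref{smooth.th}, we conclude $\nabla u_\alpha$ never vanishes in $B$ for any $\alpha$, hence $\det DU>0$ in $B$, so $U$ is a local diffeomorphism — in particular a local homeomorphism — in $B$.

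The main obstacle I anticipate is the justification that the regularity/estimates underlying Proposition \ref{Mconst} and the Stoilow-representation machinery of Section \ref{S2} are genuinely available for the \emph{approximating} problems uniformly in $n$, and that the convergence $U^{(n)}\to U$ is strong enough in $C^1$ up to the boundary; the interior part of this rests on the $W^{1,2}$-stability of solutions under $L^\infty$-bounded, $L^p$-convergent coefficients (a standard compactness argument using the ellipticity bound \eqref{ell} and the equi-integrability of $|\nabla u^{i,(n)}|^2$), while the boundary part uses that $\Phi$ is fixed and smooth, so no approximation is needed there. A secondary delicate point is that, strictly speaking, we only obtain $\det DU\ge 0$ from a naive pointwise passage to the limit; it is precisely to upgrade this to strict positivity that the argument-principle count above, rather than a bare limit, is essential, and I would make sure the winding-number stability is stated for the vector field on $\partial B$, where uniform convergence and a uniform lower bound on $|\partial_z u_\alpha^{(n)}|$ are both in force.
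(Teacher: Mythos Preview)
Your overall strategy coincides with the paper's: mollify $\sigma$, apply Theorem~\ref{smooth.th} to the approximants $U^{(n)}$, and pass to the limit. The gap is in your final step. You propose to read off that $u_\alpha$ has no critical points in $B$ from the identity \eqref{MalphaInt} together with the structure theory of \cite{AMPisa}, ``exactly as in the proof of Theorem~\ref{smooth.th}''. But, as you yourself point out at the outset, with $\sigma$ only $C^\alpha$ one cannot rewrite ${\rm div}(\sigma\nabla u)=0$ in the form \eqref{Ab}; the critical--point theory of \cite{AMPisa} and the argument--principle formula \eqref{MalphaInt} in Proposition~\ref{Mconst} are established precisely under the Lipschitz hypothesis that has just been dropped. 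So while the boundary winding $\tfrac{1}{2\pi}\int_{\partial B}{\rm d\,arg}(\partial_z u_\alpha)=0$ is indeed stable under your $C^1(\overline B)$ convergence, you have no tool left to interpret it as the interior critical--point count for the limiting equation, and hence no way to conclude $\nabla u_\alpha\neq 0$ in $B$.

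The paper closes exactly this gap by replacing \cite{AMPisa} with the notion of \emph{geometrical} critical points and \emph{geometric index} from \cite{AMsiam}, which is tailored to pure divergence--form equations with merely bounded coefficients. The key input is the stability result \cite[Proposition~2.6]{AMsiam}: since $(M_{\varepsilon_n})_\alpha=0$ for the approximants, the limit $u_\alpha$ has no geometrical critical point in $B$. One then invokes \cite[Theorem~3]{ANARMA} (not the \cite{AMPisa} machinery) to conclude that $U$ is a local homeomorphism in $B$. Note that the Lemma, as stated, asserts only local homeomorphy; the upgrade to $\det DU>0$ in $B$ is obtained afterwards in the paper via Theorem~\ref{HL}, not directly at this stage.
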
 
\begin{proof}
Let $\sigma_{\varepsilon}$ be a family of $C^{\infty}$ mollifications of $\sigma$, which satisfy ellipticity and H\"older regularity uniformly with respect to $\varepsilon$. Let $U_{\varepsilon}$ be the solution to
\begin{equation}\label{moll}
\left\{
\begin{array}{lll}
{\rm div} (\sigma_{\varepsilon} \nabla u_{\varepsilon}^i)=0,&\hbox{in}&B,\\
u_{\varepsilon}^i=\varphi^i,&\hbox{on}&\partial B \ , i=1,2 \ .
\end{array}
\right.
\end{equation}
By regularity theory,  $U_{\varepsilon} \in C^{1,\alpha}(\overline B, \mathbb R^2)$ uniformly with respect to $\varepsilon$, hence, by the Ascoli--Arzelà Theorem, $U_{\varepsilon_n} \to U$ in $C^{1}(\overline B, \mathbb R^2)$ for some sequence $\varepsilon_n \to 0$. Therefore, for $n$ large enough
\[
\det DU_{\varepsilon_n} >0 \text{ everywhere on } \partial B
\]
thus, by Theorem \ref{smooth.th}, $U_{\varepsilon_n}$ is a diffeomorphism of $\overline{B}$ onto
$\overline{D}$. In particular the number $(M_{\varepsilon_n})_{\alpha}$, associated to $U_{\varepsilon_n}$ according to the definition \eqref{Malpha},
equals zero for all $\alpha$ and for $n$ large enough. In view of the stability of the geometric index, established in \cite[Proposition 2.6]{AMsiam}, we have that $u_{\alpha} = \cos \alpha \, u^1 + \sin \alpha \, u^2$ has no (geometrical) critical point in $B$ for any $\alpha$. We may invoke now \cite[Theorem 3]{ANARMA} to obtain that $U$ is locally a homeomorphism in $B$.
\end{proof}
We now recall a variant to the celebrated H. Lewy's Theorem \cite{Lewy}, recently obtained in \cite[Theorem 1.1]{ANroma}. Here $\Omega\subset \mathbb R^2$ is any open set.
\begin{theorem}\label{HL}
Assume that the entries of $\sigma$ satisfy $\sigma_{ij}\in C^{\alpha}_{loc}(\Omega)$ for some $\alpha \in (0,1)$ and for every $i,j=1,2$.
Let $U=(u^1,u^2) \in W^{1,2}_{loc}(\Omega, \mathbb R^2)$  be such that
\begin{equation}\label{basiceq}
{\rm div} (\sigma \nabla u^i)=0 \ , i=1,2,
\end{equation}
weakly in $\Omega$. If $U$ is  locally a homeomorphism, then it is, locally, a  diffeomorphism, that is
\begin{equation}\label{detloc}
{\rm det} DU\neq 0 \hbox{ for every } x \in \Omega\ .
\end{equation}
\end{theorem}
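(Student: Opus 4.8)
The plan is to argue by contradiction. Assume there is a point $x_0\in\Omega$ at which $\det DU(x_0)=0$ while $U$ is injective on some neighbourhood of $x_0$; since the $C^\alpha_{loc}$ hypothesis on $\sigma$ upgrades the $W^{1,2}_{loc}$ solution to $U\in C^{1,\alpha}_{loc}(\Omega,\mathbb R^2)$, the matrix $DU(x_0)$ is well defined in the classical sense. The first step is to exploit the invariance of the hypotheses under a rotation of the target plane: for every $\alpha$ the combination $u_\alpha=\cos\alpha\,u^1+\sin\alpha\,u^2$ again solves ${\rm div}(\sigma\nabla u_\alpha)=0$, the pair $(u_\alpha,u_{\alpha+\pi/2})$ is an invertible linear image of $U$, and hence it is still a locally injective $\sigma$-harmonic mapping whose Jacobian vanishes at $x_0$. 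Since $\det DU(x_0)=0$, the vectors $\nabla u^1(x_0)$ and $\nabla u^2(x_0)$ are linearly dependent, so some $\alpha_0$ gives $\nabla u_{\alpha_0}(x_0)=0$; replacing $(u^1,u^2)$ by the rotated pair $(u_{\alpha_0},u_{\alpha_0+\pi/2})$, I may assume $\nabla u^1(x_0)=0$, that is, $x_0$ is a genuine critical point of the $\sigma$-harmonic function $u^1$, of some finite multiplicity $N\ge1$.

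The key geometric ingredient is the local structure of a $\sigma$-harmonic function near a critical point. Working on a small disk around $x_0$, let $v^1\in W^{1,2}_{loc}$ be the stream function of $u^1$ (it exists precisely because ${\rm div}(\sigma\nabla u^1)=0$), so that $f^1=u^1+iv^1$ solves a Beltrami-type equation and admits a Stoilow factorization $f^1=F\circ\chi$ with $F$ holomorphic and $\chi$ a homeomorphism; the critical point of $u^1$ corresponds to a zero of $F'$ of order $N$ at $\chi(x_0)$. Taking real parts and pulling back through $\chi$, one gets that the level set $\Gamma=\{u^1=u^1(x_0)\}$ consists, inside a sufficiently small disk about $x_0$, of exactly $2(N+1)\ge4$ pairwise disjoint simple arcs, each having $x_0$ as an endpoint. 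This is precisely the content of the notions of geometrical critical point and geometric index developed in \cite{AMPisa,AMsiam} (and used, in the $C^\alpha$ setting, in the proof of Lemma \ref{Phismooth} above).

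Finally I would extract a contradiction from these at least four arcs. Parametrize them as $\gamma_1,\dots,\gamma_{2(N+1)}$, each issuing from $x_0$, and note that $u^1\equiv u^1(x_0)$ on $\Gamma$. Since $U$ is injective near $x_0$, the function $u^2$ restricted to $\Gamma$ must be injective; in particular, on each $\gamma_k$ the continuous function $u^2\circ\gamma_k-u^2(x_0)$ cannot vanish away from $x_0$ (otherwise two distinct points of $\Gamma$ would share the same $U$-image), hence it keeps a constant sign there. With at least four arcs and only two available signs, two arcs $\gamma_k$ and $\gamma_l$ carry the same sign, so the images $u^2(\gamma_k)$ and $u^2(\gamma_l)$ are intervals with common endpoint $u^2(x_0)$ lying on the same side of it, and therefore overlap. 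Choosing a common value produces points $p\in\gamma_k$ and $q\in\gamma_l$ with $p\neq q$ and $U(p)=U(q)$, contradicting local injectivity. The step I expect to be the main obstacle is the second one: making the count of exactly $2(N+1)$ nodal arcs at a critical point rigorous when $\sigma$ is only H\"older continuous; this does not need any extra smoothness of $\sigma$ but does require the Bers--Vekua similarity principle and the Hartman--Wintner asymptotics behind the critical-point theory invoked above.
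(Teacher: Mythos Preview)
The paper does not prove Theorem~\ref{HL} at all: it is quoted verbatim from \cite[Theorem~1.1]{ANroma} and used as a black box. So there is no ``paper's own proof'' to compare against; what you have written is an independent argument.

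Your outline is sound and is in fact close in spirit to the classical Lewy argument and to the machinery of \cite{AMPisa,AMsiam} that the authors rely on elsewhere. The reduction step (rotate the target so that $\nabla u^1(x_0)=0$), the structural step (the level set $\{u^1=u^1(x_0)\}$ through a critical point of multiplicity $N\ge 1$ consists locally of $2(N+1)\ge 4$ arcs meeting only at $x_0$), and the pigeonhole step (two of these arcs carry the same sign of $u^2-u^2(x_0)$, producing two distinct points with the same $U$--image) are all correct.

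The one genuinely delicate point is the one you yourself flag: with $\sigma$ only $C^\alpha$, you cannot rewrite the equation in the form \eqref{Ab} and invoke \cite{AMPisa} directly, so you must argue that a \emph{classical} critical point $\nabla u^1(x_0)=0$ is automatically a \emph{geometrical} critical point in the sense of \cite{AMsiam}. One clean way to close this is to note that $f=u^1+iv^1\in C^{1,\alpha}$ with $Df(x_0)=0$; if $F'(\chi(x_0))\neq 0$ in the Stoilow factorisation $f=F\circ\chi$, then $\chi=F^{-1}\circ f$ is $C^{1,\alpha}$ near $x_0$ with $D\chi(x_0)=0$, and hence $\det D\chi$ changes sign (or vanishes) on sets of positive measure near $x_0$, contradicting the fact that a quasiconformal map is sense--preserving with Jacobian positive a.e. Hence $F'(\chi(x_0))=0$, which yields the $\ge 4$ nodal arcs as you claim. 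With this bridge in place your argument goes through.
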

The following Theorem mimics an analogous result obtained for harmonic mappings in \cite{ANPISA}. 
We recall the following definition.
\begin{definition}\label{local}
Given $P\in \overline{B}$, a mapping  $U\in C(\overline{B};\mathbb
R^2)$ is a \emph{local homeomorphism} at $P$ if there exists a
neighborhood $G$ of $P$ such that $U$ is one--to--one on $G\cap
\overline{B}$.
\end{definition}
\begin{theorem}\label{hom}
Let $\Phi:\partial B\to \gamma \subset \mathbb R^2$ be a homeomorphism onto a simple closed curve $\gamma$.
Let $D$ be the bounded domain such that $\partial D=\gamma$.
Let $U\in W^{1,2}_{\rm loc} (B;\mathbb R^2)\cap C(\overline{B};\mathbb R^2)$ be the solution to (\ref{main}). Assume that the entries of $\sigma$ satisfy $\sigma_{ij}\in C^{\alpha}_{loc}({B})$ for some $\alpha \in (0,1)$ and for every $i,j=1,2$.
If, for every $P\in \partial B$,
the mapping $U$ is a local homeomorphism at $P$, then it
 is a homeomorphism of $\overline{B}$ onto $\overline{D}$ and it is a diffeomorphism of $B$ onto $D$ .
\end{theorem}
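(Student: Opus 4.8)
\emph{Overview and the thin collar (Step 1).} The plan is to trade the purely topological boundary hypothesis on $\partial B$ for a smooth one on a nearby concentric circle $\partial B_\rho$, then to invoke the already-proved Lemma \ref{Phismooth} on the disk $B_\rho$, and finally to globalize injectivity by a degree argument. First I would note that, since $U$ is a local homeomorphism at every $P\in\partial B$ while $U|_{\partial B}=\Phi$ is injective, a routine compactness argument shows that $U$ is injective on a closed collar $\overline B\setminus B_{\rho_0}$, for some $\rho_0\in(0,1)$: cover $\partial B$ by finitely many open sets on which $U$ is one-to-one, choose a Lebesgue number, and observe that a pair of distinct points $x_n\neq y_n$ with $|x_n|,|y_n|\to1$ and $U(x_n)=U(y_n)$ would, along a subsequence, converge to one and the same point of $\partial B$ (because $\Phi$ is injective), hence would eventually lie in a single one-to-one neighborhood --- a contradiction. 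This is the analogue of the reduction carried out for harmonic mappings in \cite{ANPISA}.

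\emph{Smoothing the boundary datum (Step 2).} On the open annulus $B\setminus\overline{B_{\rho_0}}$ the map $U$ is injective, hence a local homeomorphism, and $\sigma_{ij}\in C^{\alpha}_{loc}$; so Theorem \ref{HL} yields $\det DU\neq0$ there, and, being continuous on a connected set, $\det DU$ keeps a constant sign. By interior regularity $U\in C^{1,\alpha}_{loc}(B)$; hence, for each $\rho\in(\rho_0,1)$, the restriction $\Phi_\rho:=U|_{\partial B_\rho}$ is of class $C^{1,\alpha}$, is injective by Step 1, and has non-vanishing velocity since $DU$ is invertible on $\partial B_\rho$; so $\Phi_\rho$ is a $C^{1,\alpha}$ diffeomorphism of $\partial B_\rho$ onto a simple closed curve $\gamma_\rho$. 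Replacing, if necessary, $U$ by $(u^2,u^1)$ --- which alters neither the hypotheses nor the conclusion sought, but reverses the sign of $\det DU$ --- we may assume $\det DU>0$ on $\partial B_\rho$. Then, after the obvious rescaling, $U$ and $\Phi_\rho$ satisfy on $B_\rho$ the hypotheses of Lemma \ref{Phismooth}, whose conclusion is that $U$ is locally a homeomorphism in $B_\rho$.

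\emph{Globalization (Step 3).} Choosing $\rho_0<\rho''<\rho<1$, the sets $B_\rho$ and $\overline B\setminus B_{\rho''}$ cover $\overline B$, and $U$ is a local homeomorphism on each of them --- on $B_\rho$ by Step 2, and on $\overline B\setminus B_{\rho''}$ because it is injective there (Step 1) and, at the points of $\partial B$, by hypothesis. Thus $U$ is a local homeomorphism at every point of $\overline B$, so by Theorem \ref{HL} it is a local diffeomorphism in $B$ and $\det DU$ has a constant sign in $B$. Since $U\in C(\overline B;\mathbb R^2)$ and $U|_{\partial B}=\Phi$ is an orientation preserving homeomorphism onto $\gamma=\partial D$, the topological degree satisfies $\deg(U,B,q)=1$ for $q\in D$ and $\deg(U,B,q)=0$ for $q$ in the unbounded component of $\mathbb R^2\setminus\gamma$; the first relation forces $\det DU>0$ throughout $B$, whence $\#\big(U^{-1}(q)\cap B\big)=\deg(U,B,q)$ for every $q\notin\gamma$. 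It follows that $U$ is one-to-one on $B$ with $U(B)=D$; since $U(\partial B)=\gamma$ and $\overline B$ is compact, $U$ is a homeomorphism of $\overline B$ onto $\overline D$, and $\det DU\neq0$ in $B$ makes $U|_B$ a diffeomorphism of $B$ onto $D$.

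\emph{The main obstacle} I expect is Step 2: one must make sure that the interior $C^{1,\alpha}$ regularity of $U$, together with Theorem \ref{HL}, really do deliver admissible Dirichlet data --- of class $C^{1,\alpha}$ and with non-degenerate Jacobian of $U$ on $\partial B_\rho$ --- for Lemma \ref{Phismooth}, so that the merely homeomorphic behaviour of $U$ on $\partial B$ can be replaced by genuinely smooth behaviour on $\partial B_\rho$. The rest is the compactness of Step 1 and the degree/monodromy bookkeeping of Step 3, the latter being essentially the argument already used at the end of the proof of Theorem \ref{smooth.th}.
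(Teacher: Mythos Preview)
Your proposal is correct and follows essentially the same route as the paper: the paper's Lemma~\ref{top} is your Step~1, the application of Lemma~\ref{Phismooth} on $B_\rho$ with the $C^{1,\alpha}$ datum $\Phi_\rho=U|_{\partial B_\rho}$ (combined with Theorem~\ref{HL}) is your Step~2, and the paper invokes the Monodromy Theorem where you use a degree computation in Step~3. The only cosmetic difference is that the paper packages the collar injectivity as a separate lemma and cites monodromy rather than degree, but the logical skeleton is identical.
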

We first need the following Lemma, which is adapted from  \cite[Lemma 4.1]{ANPISA}. Let us recall that $B_{\rho}$ denotes the disk of radius $\rho>0$ concentric to $B$.
\begin{lemma}\label{top}
Assume $\Phi:\partial B\to \gamma\subset \mathbb R^2$ is a
homeomorphism onto a simple closed curve $\gamma$. Let $U\in W^{1,2}_{loc}
(B;\mathbb R^2)\cap C(\overline{B};\mathbb R^2)$ be  the
solution to   \eqref{main}.  Assume that the entries of $\sigma$ satisfy $\sigma_{ij}\in C^{\alpha}_{loc}({B})$ for some $\alpha \in (0,1)$ and for every $i,j=1,2$. If, in addition, for every $P\in
\partial B$ the mapping $U$ is a local homeomorphism near $P$,
then there exists $\rho\in (0,1)$ such that $U$ is a
diffeomorphism of $B\setminus \overline{B_{\rho}}$ onto
$U\Big(B\setminus \overline{B_{\rho}}\Big)$.
\end{lemma}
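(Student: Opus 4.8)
The strategy is to first promote the pointwise ``local homeomorphism on $\partial B$'' hypothesis to a statement valid on a whole one--sided collar $B\setminus\overline{B_{\rho_0}}$ of $\partial B$, where in particular $\det DU\neq 0$, and then to upgrade local injectivity on such a collar to genuine injectivity by a compactness argument that uses crucially that $\Phi$ is one--to--one on $\partial B$.

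\emph{Step 1: a uniform collar on which $U$ is locally a homeomorphism.} For each $P\in\partial B$ fix, by hypothesis (Definition \ref{local}), an open neighbourhood $G_P$ of $P$ such that $U$ is one--to--one on $G_P\cap\overline B$. By compactness of $\partial B$ we extract a finite subcover $G_{P_1},\dots,G_{P_N}$ and set $V=\bigcup_j G_{P_j}$. Then $V$ is open and contains $\partial B$, so $\overline B\setminus V$ is a compact subset of $\overline B$ disjoint from $\partial B$; hence $\overline B\setminus V\subseteq \overline{B_{\rho_0}}$ for some $\rho_0\in(0,1)$, i.e. $B\setminus\overline{B_{\rho_0}}\subseteq V$. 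At any $x$ in the open annulus $B\setminus\overline{B_{\rho_0}}$ we have $x\in G_{P_j}$ for some $j$, and $U$ restricted to the open neighbourhood $G_{P_j}\cap B$ of $x$ is continuous and injective, hence, by invariance of domain, a homeomorphism onto its open image. Thus $U$ is locally a homeomorphism on the open set $B\setminus\overline{B_{\rho_0}}$, and since $\sigma_{ij}\in C^{\alpha}_{loc}(B)$, Theorem \ref{HL} gives
\begin{equation*}
\det DU\neq 0\qquad\text{everywhere in } B\setminus\overline{B_{\rho_0}}
\end{equation*}
(here $DU$ is meaningful because, by the interior regularity underlying Theorem \ref{HL}, $U\in C^{1}(B;\mathbb R^2)$).

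\emph{Step 2: global injectivity on a collar, and conclusion.} I claim there is $\rho\in[\rho_0,1)$ with $U$ injective on $B\setminus\overline{B_\rho}$. If not, then for every $\rho\in[\rho_0,1)$ there are $z_\rho\neq w_\rho$ in $B\setminus\overline{B_\rho}$ with $U(z_\rho)=U(w_\rho)$; choosing $\rho=\rho_n\uparrow 1$ and passing to a subsequence we get $z_n\to z_0$, $w_n\to w_0$ with $|z_0|=|w_0|=1$. By continuity of $U$ on $\overline B$ and the boundary condition $U=\Phi$ on $\partial B$ this forces $\Phi(z_0)=\Phi(w_0)$, hence $z_0=w_0=:P$ since $\Phi$ is injective on $\partial B$. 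But then, for all large $n$, both $z_n$ and $w_n$ lie in the neighbourhood of $P$ furnished by the hypothesis, on which $U$ is one--to--one; so $z_n=w_n$, a contradiction. Fixing such a $\rho$, the map $U$ is $C^1$, injective, and has $\det DU\neq0$ on the open annulus $B\setminus\overline{B_\rho}$, so by the inverse function theorem it is an open local diffeomorphism there, and being injective it is a diffeomorphism of $B\setminus\overline{B_\rho}$ onto the open set $U\big(B\setminus\overline{B_\rho}\big)$.

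The only genuinely delicate point is Step 2: a priori the failure of injectivity could persist between pairs of points approaching $\partial B$ from ``opposite sides'', and it is precisely the injectivity of the Dirichlet datum $\Phi$ on $\partial B$, combined with the uniform local injectivity extracted in Step 1, that excludes this. The remaining ingredients — compactness, invariance of domain, the inverse function theorem, and the already quoted Theorem \ref{HL} — are soft.
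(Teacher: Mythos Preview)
Your proof is correct and follows the same overall strategy as the paper's: use compactness of $\partial B$ to obtain a uniform one--sided collar on which $U$ is locally injective, exploit the injectivity of $\Phi$ on $\partial B$ to upgrade this to global injectivity on a (possibly thinner) collar, and then apply Theorem~\ref{HL} to conclude $\det DU\neq 0$ there. The one genuine difference is in the injectivity step. The paper argues directly and quantitatively: it fixes a uniform scale $\delta$ so that $U$ is one--to--one on each $B_{2\delta}(P_k)\cap\overline B$, and then for distinct $P,Q$ in the collar splits into the cases $|P-Q|<\delta$ (handled by local injectivity) and $|P-Q|\ge\delta$ (handled by projecting radially to $P',Q'\in\partial B$, bounding $|\Phi(P')-\Phi(Q')|$ from below, and absorbing the error via the modulus of continuity of $U$), thereby producing an explicit $\rho$. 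Your sequential contradiction is softer and shorter, but non--constructive. A minor further difference of ordering: the paper first proves injectivity on the closed annulus $\overline B\setminus B_\rho$ and only then invokes Theorem~\ref{HL}, whereas you invoke Theorem~\ref{HL} already in Step~1 to secure $\det DU\neq0$ on the open collar before addressing global injectivity; either order is fine.
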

\begin{proof}
For every $P\in \partial B$ let
\[s(P) = \sup \left\{s>0 | U \text{ is a  homeomorphism in }  B_{s}(P)\cap\overline{B} \right\} \ , \]
the function $s(P)$ is positive valued and lower semicontinuous hence, by the compactness of $\partial B$, there exists $\delta>0$ such that $s(P)> 2\delta$ for all $P\in \partial B$. Again by compactness,
there exist finitely many points $P_1,\hdots,P_K\in \partial B$ such that
\begin{equation*}
\partial B \subset  \bigcup\limits_{k=1}^K B_{\delta}(P_k),
\end{equation*}
and $U$ is one--to--one on $B_{2 \delta}(P_k)\cap\overline{B}$
for every $k$. Note that there exists $\rho_0\in (0,1)$ such that
\begin{equation*}
\overline{B}\setminus B_{\rho_0}\subset \bigcup\limits_{k=1}^K
B_{\delta}(P_k).
\end{equation*}
Let $P,Q$ be two distinct points in $\overline{B}\setminus
B_{\rho_0}$. If $|P-Q|<\delta$, then there exists $k=1,\hdots,K$
such that $P,Q\in B_{2 \delta}(P_k)$ and, hence, $U(P)\neq U(Q)$.
Assume now $|P-Q|\geq \delta$. Let
\begin{equation*}
P^{\prime}=\frac{P}{|P|}\quad,\quad Q^{\prime}=\frac{Q}{|Q|}.
\end{equation*}
We have $|P-P^{\prime}|<1-\rho,\, |Q-Q^{\prime}|<1-\rho,$ and thus
\begin{equation*}
|P^{\prime}- Q^{\prime}| >|P-Q| - 2(1-\rho) \geq \delta -2(1-\rho).
\end{equation*}
Choosing $\rho_1$, $\rho_0\leq \rho_1<1$ such that $(1-\rho_1)<\frac{\delta}{4}$, we have
$|P^{\prime}- Q^{\prime}|>\frac{\delta}{2}.$ Now we use the fact
that $P^{\prime}$ and $Q^{\prime}$ belong to $\partial B$ and
$\Phi$ is one--to--one to deduce that there exists $c>0$ such that
\begin{equation*}
|\Phi(P^{\prime})- \Phi(Q^{\prime})|   \geq c.
\end{equation*}
Recall that $U$ is uniformly continuous on $\overline{B}$. Denoting by $\omega$ its modulus of continuity, we have
\begin{equation*}
|U(P) - U(Q) |\geq |U(P^{\prime})- U(Q^{\prime})| -2 \omega(1-\rho)=
\end{equation*}
\begin{equation*}
|\Phi(P^{\prime})- \Phi(Q^{\prime})| -2 \omega(1-\rho)\geq c -2 \omega(1-\rho).
\end{equation*}
Choosing $\rho$, $\rho_1\leq \rho<1$, such that $1-\rho<\omega^{-1}\big(\frac c 4\big)$ we obtain
\begin{equation*}
|U(P) - U(Q) |\geq \frac c 2 >0,
\end{equation*}
which implies the injectivity of $U$ in $\overline{B}\setminus
B_{\rho}$. Consequently, by Theorem \ref{HL}, $\det DU\neq 0$
in $B\setminus \overline{B_{\rho}}$ and the thesis follows.
\end{proof}
\begin{proof}[Proof of Theorem \ref{hom}]
 In view of the already quoted Monodromy Theorem, it suffices to show that $\det DU\neq 0$ everywhere in $B$.

For every $r\in (0,1)$, let us write $\Phi^r:\partial B_r\to \mathbb R^2$ to denote the application given by
\begin{equation*}
\Phi^r= U|_{\partial B_r}.
\end{equation*}
It is obvious, by interior regularity of $U$, that $\Phi^r$ belongs to $C^{1,\alpha}$. On the other hand,
by Lemma \ref{top}, there exists $\rho\in (0,1)$ such that for
every $r\in (\rho,1)$ the mapping $\Phi^r:\partial B_r\to
\gamma_r\subset \mathbb R^2$ is a diffeomorphism of $\partial B_r$
onto a simple closed curve $\gamma_r$. Now, when restricted to $\overline{B_r}$, $U$  
solves (\ref{main}) with $\Phi$ replaced by $\Phi^r$, and $B$ by $B_r$ . Then, up to a rescaling of coordinates,
Lemma \ref{Phismooth} is applicable  and we obtain, in combination with Theorem \ref{HL},
\begin{equation*}
\det DU\neq 0, \quad\hbox{ everywhere in } {B_r} \  .
\end{equation*}
Finally, by Lemma \ref{top} we have $\det DU\neq 0$ in $B\setminus
\overline{B_{\rho}(0)}$ so that $\det DU\neq 0$ everywhere in $B$.
\end{proof}
We now conclude the proof of the main Theorem \ref{main.th}.
\begin{proof}[Proof of Theorem \ref{main.th}]
Having assumed $\det DU >0$  on $\partial B$, by continuity, one can find $0<\rho<1$, sufficiently close to $1$ such that $\det DU >0$ on $\overline B \setminus B_{\rho}$. By  Theorem \ref{hom}, we have that $U$ is  a global homeomorphism, and that $\det DU >0$ in $B$. Consequently, $\det DU >0$ on all of $\overline B$ and the thesis follows.
\end{proof}
\section{An improvement}\label{S4}
In accordance with \cite{ANPISA}, we prove a variation of Theorem \ref{main.th}. First, we recall the following:
\begin{definition}\label{nc.def}
Given a Jordan domain $D$, let us denote by ${\rm co} (D)$ its
convex hull. We define the {\em convex part} of $\partial D$ as
the closed set $\gamma_c=\partial D\cap \partial ({\rm co}(D))$.
Consequently we define  the {\em non--convex part} of $\partial D$
as the open subset $\gamma_{nc}=\partial D\setminus \partial ({\rm
co}(D))$.
\end{definition}
\begin{theorem}\label{hopf}
Under the assumptions of Theorem \ref{main.th}, if
\begin{equation}\label{ncd>0}
\det DU>0\quad \hbox{everywhere on $\Phi^{-1}(\gamma_{nc})$},
\end{equation}
where $\gamma_{nc}$ is the set introduced in Definition
\ref{nc.def} above, then  the
mapping $U$ is a diffeomorphism of $\overline{B}$ onto
$\overline{D}$.
\end{theorem}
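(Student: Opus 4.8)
The plan is to derive Theorem~\ref{hopf} from Theorem~\ref{main.th} by showing that the partial positivity hypothesis \eqref{ncd>0} already forces the full condition \eqref{d>0}, i.e. that $\det DU>0$ \emph{everywhere} on $\partial B$. So fix $P\in\partial B$; if $\Phi(P)\in\gamma_{nc}$ there is nothing to do, and the only work is for $\Phi(P)\in\gamma_c=\partial D\cap\partial({\rm co}(D))$. First I would produce a direction adapted to $P$. Since $\Phi(P)$ is a boundary point of the convex set ${\rm co}(D)$, it has a supporting line $L$, and since $\gamma=\partial D$ is a $C^1$ Jordan curve lying in $\overline{{\rm co}(D)}$ and passing through $\Phi(P)$, the line $L$ must be the tangent line to $\gamma$ at $\Phi(P)$. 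Let $n=(\cos\alpha,\sin\alpha)$ be the outer unit normal to ${\rm co}(D)$ along $L$ (equivalently, the outer normal to $D$ at $\Phi(P)$), and set $u_\alpha=\cos\alpha\,u^1+\sin\alpha\,u^2$ as in \eqref{ualpha}. Then the affine map $\ell(x)=n\cdot x$ satisfies $\ell\le\ell(\Phi(P))$ on $\overline{{\rm co}(D)}\supseteq\overline D$, so the boundary datum $u_\alpha|_{\partial B}(\vartheta)=\ell(\Phi(\vartheta))$ attains its maximum at the argument $\vartheta_P$ of $P$.

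Next I would run a Hopf boundary point argument. Since $u_\alpha$ solves ${\rm div}(\sigma\nabla u_\alpha)=0$ in $B$, the maximum principle together with the previous step shows that $u_\alpha$ attains its maximum over $\overline B$ at $P\in\partial B$; moreover $u_\alpha$ is non--constant, for otherwise $\gamma$ would lie on the line $\{\ell=\ell(\Phi(P))\}$. Hence the maximum is not attained inside $B$, and, the unit disk satisfying an interior ball condition, the Hopf lemma for uniformly elliptic divergence--form equations yields $\partial_\nu u_\alpha(P)>0$ for the outer normal $\nu(P)=P$. On the other hand, $\frac{d}{d\vartheta}\big(u_\alpha(e^{i\vartheta})\big)=n\cdot\Phi'(\vartheta)$ vanishes at $\vartheta_P$, because $\vartheta\mapsto\ell(\Phi(\vartheta))$ has an interior maximum there. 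So the tangential derivative of $u_\alpha$ at $P$ vanishes and $\nabla u_\alpha(P)=\lambda\,P$, where $\lambda=\partial_\nu u_\alpha(P)>0$.

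Finally I would read off the sign of the Jacobian. By rotation invariance of the determinant, $\det DU=\langle J\nabla u_\alpha,\nabla u_{\alpha+\pi/2}\rangle$ at every point of $B$, where $u_{\alpha+\pi/2}=-\sin\alpha\,u^1+\cos\alpha\,u^2$ and $J$ is the rotation \eqref{J}. At $P$, $J\nabla u_\alpha(P)=\lambda\,JP$ is $\lambda$ times the counterclockwise unit tangent to $\partial B$; computing the tangential derivative of $u_{\alpha+\pi/2}$ along $\partial B$ there, this gives $\det DU(P)=\lambda\,(-\sin\alpha,\cos\alpha)\cdot\Phi'(\vartheta_P)$. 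Since $\Phi$ is orientation preserving and $n=(\cos\alpha,\sin\alpha)$ is the outer normal to $D$ at $\Phi(P)$, the tangent $\Phi'(\vartheta_P)$ is a strictly positive multiple of $(-\sin\alpha,\cos\alpha)=Jn$; therefore $\det DU(P)=\lambda\,|\Phi'(\vartheta_P)|>0$. Since $P\in\partial B$ was arbitrary, \eqref{d>0} holds, and Theorem~\ref{main.th} applies, giving that $U$ is a diffeomorphism of $\overline B$ onto $\overline D$.

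The step I expect to be the main obstacle is the boundary point lemma: one must make sure that Hopf's lemma is available for ${\rm div}(\sigma\nabla u)=0$ under the mere H\"older continuity of $\sigma$ assumed in Theorem~\ref{main.th} (not Lipschitz), which the divergence--form versions in the literature do cover. What remains is a careful bookkeeping of orientations, needed to upgrade $\partial_\nu u_\alpha(P)>0$ into the \emph{strict} positivity, and not mere non--vanishing, of $\det DU(P)$; this is exactly the point where the standing normalization that $\Phi$ be orientation preserving enters.
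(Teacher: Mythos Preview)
Your argument is correct and is essentially the same approach the paper takes: the paper's proof refers to \cite[Theorem~5.2]{ANPISA} for the supporting--line/Hopf argument on the convex part of $\partial D$ and to Finn--Gilbarg \cite{F-G} for the Hopf boundary--point lemma in divergence form with H\"older coefficients, which is exactly the analytical input you flagged. You have supplied in detail the proof the paper omits.
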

It is worth noticing that, if $D$ is convex, then the condition \eqref{ncd>0} is void, which agrees with the known adaptations  \cite{bmn, ANARMA} of the well--known Rad\'o--Kneser--Choquet \cite{k} to the equation \eqref{main}.
\begin{proof}
The proof follows the same line of \cite[Theorem 5.2]{ANPISA}, the only change is that the classical Zaremba--Hopf Lemma for harmonic functions must be replaced by its appropriate adaptation to divergence structure equations with H\"older coefficients, which is due to Finn and Gilbarg \cite{F-G}. We omit the details.
\end{proof}
{\bf Acknowledgements} G.A. was supported by 
Universit\`a degli Studi di Trieste FRA 2016, 
V.N. was supported by Fondi di Ateneo Sapienza ``Metodi di Analisi Reale e Armonica per problemi stazionari ed evolutivi''.


\begin{thebibliography}{99}

\bibitem{AMPisa}
Alessandrini, G. \& Magnanini, R.: The index of isolated critical points and solutions of elliptic equations in the plane, Ann. Scuola Norm. Sup. Pisa, Cl. Sci. (4) XIX 4, 567--589 (1992).

\bibitem{AMsiam}
Alessandrini, G. \& Magnanini, R.:
\newblock {Elliptic equations in divergence form, geometric critical points of
  solutions, and {S}tekloff eigenfunctions.}
\newblock { SIAM J. Math. Anal.}, {\bf25} 5, 1259--1268 (1994).

\bibitem{ANARMA}
Alessandrini, G. \& Nesi, V.: {Univalent $\sigma$-harmonic
mappings}.  Arch. Ration. Mech. Anal., {\bf158} 2, 155--171 (2001).


\bibitem{ANPISA} Alessandrini, G. \& Nesi, V.: {Invertible harmonic mappings, beyond Kneser}. Ann. Scuola Norm. Sup. Pisa, Cl. Sci. V, {\bf 8}  5, 451--468 (2009). { Errata Corrige}. Ann. Scuola Norm. Sup. Pisa, Cl. Sci. V,  {\bf 17} 2, 815--818 (2017).
 
\bibitem{ANFINNICO} Alessandrini, G. \& Nesi, V.: {Beltrami operators, non--symmetric elliptic equations and quantitative Jacobian bounds}. Ann. Acad. Sci. Fenn. Math., {\bf34} 1, 47--67 (2009).

\bibitem{ANroma} Alessandrini, G. \& Nesi, V.: Locally invertible $\sigma$--harmonic mappings, to appear, https://arxiv.org/abs/1810.03003.

\bibitem{AS} 
Alessandrini, G. \& Sigalotti, M.: Geometric properties of solutions to the anisotropic p-Laplace equation in dimension two, Ann. Acad. Sci. Fenn. Math. {\bf 21}, 249--266 (2001).


\bibitem{bmn}
Bauman, P.,  Marini, A. \& Nesi, V.: {Univalent solutions of an
elliptic system  of partial differential equations  arising in
homogenization}. Indiana Univ. Math. J.,  {\bf50} 2 , 747--757 (2001).

\bibitem{BP} Bauman, P. \& Phillips, D.: {Univalent minimizers of polyconvex functionals in two dimensions}. Arch. Rational Mech. Anal., {\bf126}  2, 161--181 (1994).

\bibitem{bersni}
Bers, L. \& Nirenberg, L.:
\newblock {On a representation theorem for linear elliptic systems with
  discontinuous coefficients and its applications.}
\newblock In: { Convegno {I}nternazionale sulle {E}quazioni {L}ineari alle
  {D}erivate {P}arziali, {T}rieste, 1954}, 111--140. Edizioni Cremonese,
  Roma (1955).
  
\bibitem{BO}
Bojarski, B.:
Generalized solutions of a system of differential
equations of first order and of elliptic type with discontinuous coefficients. Mat.
Sb. N.S., {\bf43} 85, 451--503 (1957).

\bibitem{Choquet}
G.~Choquet, {Sur un type de transformation analytique
g\'en\'eralisant la
  repr\'esentation conforme et d\'efinie au moyen de fonctions harmoniques},
  Bull. Sci. Math. (2) \textbf{69}  156--165, (1945).

\bibitem{F-G}
Finn, R. \& Gilbarg, D.:
\newblock Asymptotic behavior and uniqueness of plane subsonic flows.
\newblock {Comm. Pure Appl. Math.} 10, 23--63  (1957).

\bibitem{duren}
Duren, P.: {Harmonic mappings in the plane}. { Cambridge Tracts in
  Mathematics}, vol. 156, Cambridge University Press, Cambridge (2004).
  
  \bibitem{kalaj}
  Kalaj, D.:
Invertible harmonic mappings beyond the Kneser theorem and quasiconformal harmonic mappings. 
Studia Math. {\bf 207} 2, 117--136 (2011). 
	
\bibitem{kerek}
von Ker\'ekj\'art\'o, B.: {Vorlesungen \"uber {T}opologie.
{I}.:
  {F}l\"achentopologie}, Die {G}rundlehren der mathematischen Wissenschaften.
  Bd. 8 , J. Springer, Berlin (1923).


\bibitem{k}
Kneser, H.: {L\"{o}sung der Aufgabe 41}. Jber. Deutsch. Math.-Verein., {\bf35}, 123--124 (1926).

\bibitem{IKO}
Iwaniec, T., Koski, A. \& Onninen, J.:
Isotropic p-harmonic systems in 2D Jacobian estimates and univalent solutions. 
Rev. Mat. Iberoam. {\bf 32}, 57--77 (2016). 

\bibitem{Lewy}
Lewy, H.: {On the non-vanishing of the Jacobian in certain
one-to-one mappings}. Bull. Amer. Math. Soc., {\bf42} 10, 689--692 (1936).

%

%
%


%
%
%
\bibitem{rado}
T.~Rad{\'o}, {Aufgabe 41}, Jber. Deutsch. Math.-Verein.
\textbf{35}, 49 (1926).
  
  \bibitem{wh}
Whitney, H.: {On regular closed curves in the plane}, Compositio Mathematica 4, 276--284 (1937).



\end{thebibliography}
\end{document}